\numberwithin{equation}{section}
\theoremstyle{plain}	     
\newtheorem{thm}{Theorem}[section] 
\newtheorem{cor}[thm]{Corollary}
\theoremstyle{definition}
\theoremstyle{remark} 
\newtheorem{rem}[thm]{Remark}
\newcommand{\vp}{\varphi}
\newcommand{\slem}{\operatorname{sl}}
\begin{document}

\title{Applications of generalized trigonometric functions
with two parameters
\footnote{The work of S.\,Takeuchi was supported by JSPS KAKENHI Grant Number 17K05336.}}
\author{Hiroyuki Kobayashi and Shingo Takeuchi \\
Department of Mathematical Sciences\\
Shibaura Institute of Technology
\thanks{307 Fukasaku, Minuma-ku,
Saitama-shi, Saitama 337-8570, Japan. \endgraf
{\it E-mail address\/}: shingo@shibaura-it.ac.jp \endgraf
{\it 2010 Mathematics Subject Classification.} 
33B10, 34B10}}
\date{}

\maketitle

\begin{abstract}
Generalized trigonometric functions (GTFs) are simple generalization of the classical 
trigonometric functions. GTFs are deeply related to the $p$-Laplacian, which is known 
as a typical nonlinear differential operator, and there are a lot of works on GTFs 
concerning the $p$-Laplacian. However, few applications to differential equations unrelated 
to the $p$-Laplacian are known.
We will apply GTFs with two parameters to
nonlinear nonlocal boundary value problems without $p$-Laplacian.
Moreover, we will give integral formulas for the functions, e.g. Wallis-type formulas, 
and apply the formulas to the lemniscate function and the lemniscate constant. 
\end{abstract}

\textbf{Keywords:} 
Generalized trigonometric functions,
$p$-Laplacian,
Gaussian hypergeometric functions,
Wallis-type formulas.

\section{Introduction}

Let $p,\ q \in (1,\infty)$ be any constants. 
We define $\sin_{p,q}{x}$ by
the inverse function of 
\[
\sin_{p,q}^{-1}{x}:=\int_0^x \frac{dt}{(1-t^q)^{1/p}}, \quad 0 \leq x \leq 1,
\]
and $\pi_{p,q}$ by
\[ \pi_{p,q}:=2\sin_{p,q}^{-1}{1}=2\int_0^1 \frac{dt}{(1-t^q)^{1/p}}
=\frac2q B\left(\frac{1}{p^*},\frac1q\right),\]
where $p^*:=p/(p-1)$ and $B$ denotes the beta function
\[ B(x,y)=\int_0^1 t^{x-1}(1-t)^{y-1}\,dt, \quad x,\ y>0. \]
Clearly, the function $\sin_{p,q}{x}$ is increasing in $[0,\pi_{p,q}/2]$ onto $[0,1]$.
Since $\sin_{p,q}{x} \in C^1(0,\pi_{p,q}/2)$,
we define $\cos_{p,q}{x}$ by $\cos_{p,q}{x}:=(\sin_{p,q}{x})'$.
In case $p=q$, we denote $\sin_{p,p}{x}$, $\cos_{p,p}{x}$ and $\pi_{p,p}$
briefly by $\sin_p{x}$, $\cos_p{x}$ and $\pi_p$, respectively.
It is obvious that $\sin_{2}{x},\ \cos_{2}{x}$ 
and $\pi_{2}$ are reduced to the ordinary $\sin{x},\ \cos{x}$ and $\pi$,
respectively. This is the reason why these functions and the constant are called
\textit{generalized trigonometric functions} (GTFs) with parameter $(p,q)$
and \textit{the generalized $\pi$}, respectively. 
As the trigonometric functions satisfy $\cos^2{x}+\sin^2{x}=1$,
so it is shown that for $x \in [0,\pi_{p,q}/2]$
\[ \cos_{p,q}^p{x}+\sin_{p,q}^q{x}=1.\]
Moreover, we see that
\begin{equation}
\label{eq:dcos}
(\cos_{p,q}^{p-1}{x})'=\frac{(p-1)q}{p}\sin_{p,q}^{q-1}{x},
\end{equation}
which implies that
$u=\sin_{p,q}{x}$ satisfies the nonlinear differential equation with the $p$-Laplacian:
\[-(|u'|^{p-2}u')'=\frac{(p-1)q}{p}|u|^{q-2}u.\]
In case $p=q=2$, this is reduced to the simple harmonic oscillator equation 
$-u''=u$ for $u=\sin{x}$.

E.\,Lundberg originally introduced GTFs in 1879;
see \cite{LP2004} for details. After his work,
there are a lot of works in which 
GTFs and related functions 
are used to study properties as functions and problems of
existence, bifurcation and oscillation of solutions of differential equations.
See \cite{Bu1964,BE2012,DoR2005,EGL2012,E1981,LE2011,Li1995,LP2003,LP2004,
Ne2016,Sh1959,T2016b}
for general properties as functions;
\cite{DEM1989,DoR2005,DM1999,LE2011,N1995,T2012,T2016b} for applications to differential equations
involving the $p$-Laplacian;
\cite{BBCDG2006,BE2012,BL2011,EGL2012,EGL2014,LE2011,T2014} for basis properties for sequences 
of these functions;
\cite{BYpreprint,KT2017,T2012,T2016a,T2016c,Tpreprint,YH2015} 
for elliptic integrals defiined by GTFs.
However, few fundamental formulas of GTFs, including the addition theorem, and few applications 
to differential equations unrelated to the $p$-Laplacian are known, though they are 
simple generalization of the classical trigonometric functions.

In this paper, for GTFs with two parameters, 
we will give applications to differential equations (without the $p$-Laplacian) and integral formulas.
In Section 2, we will solve the nonlinear nonlocal boundary value problem:
\begin{equation}
\label{prob:1}
\vp'-(\vp')^2+\vp\vp''+\frac{2}{H}\int_0^H(\vp'(t))^2\,dt=0,\quad
\vp(0)=\vp(H)=0.
\end{equation}
This problem was studied in C.\,Cao et al \cite{CINT2015}
to investigate the self-similar 
blowup for the inviscid primitive equations of oceanic and atmospheric dynamics.
They showed the existence of positive solutions, but
gave no expression of the solutions.
Using GTFs, we will be able to express 
all positive solutions of problems including \eqref{prob:1} in terms of GTFs 
with two parameters. In particular, all the positive solutions of problem \eqref{prob:1} 
will be given as
\[\vp=\frac{2H}{(2-r)\pi_{r}}\cos_{r}^{r-1}{\left(\frac{\pi_{r}}{2H}x\right)}
\sin_{r}{\left(\frac{\pi_{r}}{2H}x\right)},\]
where 
\[r:=\left(\frac{1}{2}+\frac{1}{4\sqrt{m^2+1/4}}\right)^{-1} \in (1,2),\]
and $m>0$ is a free parameter.
In Section 3, we will construct integral formulas for GTFs with two parameters, e.g.
\[\int_0^x \sin_{p,q}^k{t} \cos_{p,q}^\ell{t}\,dt
=\frac{1}{k+1}\sin_{p,q}^{k+1}{x}
F\left(\frac{k+1}{q},\frac{1-\ell}{p};1+\frac{k+1}{q};\sin_{p,q}^q{x}\right),\]
\[\int_0^{\pi_{p,q}/2} \sin_{p,q}^k{t} \cos_{p,q}^\ell{t}\,dt
=\frac{1}{q}B\left(\frac{k+1}{q},1+\frac{\ell-1}{p}\right)\]
for $k>-1$ and $\ell>1-p$. Here, $F(a,b,c;x)$ is the Gaussian hypergeometric functions:
\[F(a,b;c;x):=\sum_{n=0}^\infty \frac{(a)_n(b)_n}{(c)_n}\frac{x^n}{n!}, \quad |x|<1,\]
where $(a)_n:=a(a+1)(a+2)\cdots (a+n-1)$ if $n \geq 1$ and $(a)_0:=1$.
We can find the former formula only for $p=q,\ k=1$ and $\ell=0$ 
in \cite[Proposition 2.5]{BE2012} and \cite[Proposition 2.3]{LE2011};
the latter formula only for $p=q$ in \cite[Proposition 3.1]{BE2012} 
and \cite[Proposition 2.4]{LE2011}. However, there seems to be no literature which deals with 
case $p \neq q$.
Moreover, we recall Wallis' formulas:
\[\int_0^{\pi/2} \sin^{2n}{t}\,dt
=\int_0^{\pi/2} \cos^{2n}{t}\,dt
=\frac12\cdot\frac34\cdot\frac56\cdot\cdots\cdot\frac{2n-1}{2n}\cdot\frac{\pi}{2},\]
\[\int_0^{\pi/2} \sin^{2n+1}{t}\,dt
=\int_0^{\pi/2} \cos^{2n+1}{t}\,dt
=\frac23\cdot\frac45\cdot\frac67\cdot\cdots\cdot\frac{2n}{2n+1}.\]
It is natural to try to obtain Wallis-type formulas for GTFs. We will give
\[\int_0^{\pi_{p,q}/2} \sin_{p,q}^{qn+r}{t}\,dt
=\frac{u(1/u)_n}{q(1/p^*+1/u)_n}\frac{\pi_{p,u}}{2}, \quad \frac{1}{u}:=\frac{r+1}{q}\]
for $r \in (-1,q-1]$ and 
\[\int_0^{\pi_{p,q}/2} \cos_{p,q}^{pn+r}{t}\,dt
=\frac{(1/v)_n}{(1/v+1/q)_n}\frac{\pi_{v^*,q}}{2}, \quad \frac{1}{v}:=\frac{r+p-1}{p}\]
for $r \in (1-p,1]$.
In particular, the former formula will be applied to obtain
Wallis-type formulas for the classical lemniscate function $\slem{x}=\sin_{2,4}{x}$, 
including
\[\int_0^{\varpi/2} \slem^{4n}{t}\,dt
=\frac{1}{3}\cdot\frac{5}{7}\cdot\frac{9}{11}\cdot\cdots\cdot\frac{4n-3}{4n-1}\cdot\frac{\varpi}{2},\]
where $\varpi=\pi_{2,4}=2.6220\ldots$ is the lemniscate constant.
It should be noted that these integrals for $\sin_{p,q}$ and $\cos_{p,q}$ 
are not necessarily equal even if $p=q$.   
Also, we have known the product formula for $\pi$:
\[\frac{\pi}{2}
=\prod_{n=1}^\infty \left(1-\frac{1}{4n^2}\right)^{-1},\]
which immediately follows from 
the infinite product formula of the sine function (see \cite[Theorem 1.2.2]{AAR1999}). 
This proof does not work for the product formula for $\pi_{p,q}$ if $p \neq q$
(in case $p=q$, the proof works well since $\pi_{p}/2=(\pi/p)/\sin{(\pi/p)}$).
However, applying our Wallis-type formulas for $\sin_{p,q}$, we will be able to show 
\[\frac{\pi_{p,q}}{2}
=\prod_{n=1}^\infty \left(1-\frac{1}{pn(qn+1-q/p)}\right)^{-1},\]
which yields, e.g.
\[\frac{\varpi}{2}
=\prod_{n=1}^\infty \left(1-\frac{1}{2n(4n-1)}\right)^{-1}.\]

\section{Applications to ODEs}

In this section, we will apply GTFs with two parameters to the nonlinear nonlocal boundary
value problem:
\begin{equation}
\label{eq:nbvp}
\vp'-(\vp')^2+\vp\vp''+\frac{2}{H}\int_0^H(\vp'(t))^2\,dt=0,\quad
\vp(0)=\vp(H)=0.
\end{equation}

The following theorem gives an expression of solutions to more general 
problem than \eqref{eq:nbvp}.

\begin{thm}
\label{thm:CINT}
Let $H>0$ and $p,\ q \in (1,\infty)$. Then,
the positive solution of the boundary value problem
\begin{equation}
\label{eq:bvp}
(p-q)u'-pq(u')^2+(p+q)uu''+1=0,\quad
u(0)=u(H)=0,
\end{equation}
is 
\begin{equation}
\label{eq:sol1}
u=\frac{2H}{q\pi_{p^*,q}}\cos_{p^*,q}^{p^*-1}{\left(\frac{\pi_{p^*,q}}{2H}x\right)}
\sin_{p^*,q}{\left(\frac{\pi_{p^*,q}}{2H}x\right)}.
\end{equation}
\end{thm}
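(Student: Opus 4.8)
The plan is to establish the theorem by direct substitution, together with a phase–plane argument for the uniqueness implicit in the phrase ``the positive solution''. Put $\lambda:=\pi_{p^*,q}/(2H)$, so that the candidate \eqref{eq:sol1} is $u=(q\lambda)^{-1}\cos_{p^*,q}^{p^*-1}(\lambda x)\sin_{p^*,q}(\lambda x)$; abbreviate $s:=\sin_{p^*,q}(\lambda x)$ and $c:=\cos_{p^*,q}(\lambda x)$. The only facts about generalized trigonometric functions with parameter $(p^*,q)$ that enter are the Pythagorean-type identity $c^{p^*}+s^q=1$ and $s'=\lambda c$; combined with the elementary arithmetic $(p^*-1)/p^*=1/p$ and $1/p^*=1-1/p$, these let me rewrite $c=(1-s^q)^{1/p^*}$ and $u=(q\lambda)^{-1}(1-s^q)^{1/p}s$.

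First I would differentiate. Inserting $s'=\lambda(1-s^q)^{1/p^*}$, the powers of $1-s^q$ combine and one obtains the clean formulas $u'=\frac1q-\frac{p+q}{pq}s^q$ and $u''=-\frac{(p+q)\lambda}{p}\,s^{q-1}(1-s^q)^{1/p^*}$, hence $uu''=-\frac{p+q}{pq}s^q(1-s^q)$. Substituting these into the left-hand side of \eqref{eq:bvp} and writing $v:=s^q$ (so $1-s^q=c^{p^*}$) turns it into a quadratic in $v$ whose constant, linear, and quadratic coefficients each vanish identically; the cancellation is exactly the factorization $pq\,t^2-(p-q)t-1=pq\bigl(t-\tfrac1q\bigr)\bigl(t+\tfrac1p\bigr)$, i.e.\ the discriminant identity $(p-q)^2+4pq=(p+q)^2$. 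For the boundary and sign conditions: $\sin_{p^*,q}0=0$ gives $u(0)=0$; $\lambda H=\pi_{p^*,q}/2$ together with $\cos_{p^*,q}(\pi_{p^*,q}/2)=0$ and $p^*-1=1/(p-1)>0$ gives $u(H)=0$; and on $(0,H)$ both $\sin_{p^*,q}(\lambda x)$ and $\cos_{p^*,q}(\lambda x)$ are positive, so $u>0$. Since the displayed formulas show $u',u''\in C[0,H]$, $u$ is a genuine solution up to the boundary, where the equation merely degenerates.

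For uniqueness I would use that \eqref{eq:bvp} is autonomous. Evaluating it at an interior critical point ($u'=0$) gives $u''=-1/((p+q)u)<0$, so a positive solution has exactly one critical point, a maximum, and on each of its two monotone arcs one may regard $w:=u'$ as a function of $u$; then $u''=w\,dw/du$ reduces \eqref{eq:bvp} to the separable first-order equation $(p+q)uw\,dw/du=pq\bigl(w-\tfrac1q\bigr)\bigl(w+\tfrac1p\bigr)$, which integrates by partial fractions to $|w-\tfrac1q|^{1/q}|w+\tfrac1p|^{1/p}=C_0u$. Letting $u\to0$ on the increasing arc forces $u'(0)=1/q$ (the alternative root $-1/p$ is excluded by $u>0$), and likewise $u'(H)=-1/p$; this fixes $C_0$, and reconstructing $u$ from $w(u)$ while requiring the two zeros to lie a distance $H$ apart pins down the remaining constant, forcing $\lambda=\pi_{p^*,q}/(2H)$ and hence \eqref{eq:sol1}. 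Consistently, the only scaling symmetry of \eqref{eq:bvp} is $u(x)\mapsto\alpha u(x/\alpha)$, which merely rescales $H$, explaining why $H$ enters \eqref{eq:sol1} only through $\pi_{p^*,q}/(2H)$.

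I expect the verification itself to be routine; the one thing to watch is the exponent bookkeeping ($1/p$, $1/p^*=1-1/p$, $p^*-1=p^*/p$) when differentiating powers of $\cos_{p^*,q}$, and the sign of $(\cos_{p^*,q}^{p^*-1})'$, which I would read off by differentiating $c^{p^*}+s^q=1$ directly rather than quoting a formula. The genuine obstacle is the uniqueness step: carefully justifying the passage to the first-order phase–plane equation and the matching of the ``travel time'' to $H$ at the two endpoints, where \eqref{eq:bvp} is singular because the coefficient $(p+q)u$ of $u''$ vanishes as $u\to0$.
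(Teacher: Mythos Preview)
Your proposal is correct, and the existence half takes a genuinely different route from the paper. The paper never substitutes \eqref{eq:sol1} back into \eqref{eq:bvp}; instead it \emph{derives} the solution entirely through the phase plane. Writing $v=u'$ and treating $u$ as a function of $v$, it integrates
\[
\frac{1}{u}\frac{du}{dv}=\frac{1/p}{v+1/p}+\frac{1/q}{v-1/q}
\]
to get the level curves $u=C\,|v+1/p|^{1/p}|v-1/q|^{1/q}$ (the same relation you reach in your uniqueness step), then solves the resulting first-order ODE for $v(x)$ by the substitution $v+1/p=(1/p+1/q)w^p$, obtaining $w=\sin_{q^*,p}\bigl(\pi_{q^*,p}(H-x)/(2H)\bigr)$, and finally invokes the symmetry identity $\sin_{q^*,p}\bigl(\tfrac{\pi_{q^*,p}}{2}(1-\xi)\bigr)=\cos_{p^*,q}^{p^*-1}\bigl(\tfrac{\pi_{p^*,q}}{2}\xi\bigr)$ from the appendix to reach \eqref{eq:sol1}. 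Your direct verification bypasses both the $w^p$ substitution and this conjugate-parameter identity, at the price of a small algebraic miracle (the quadratic in $s^q$ collapsing via $(p-q)^2+4pq=(p+q)^2$) that the paper's approach explains. Conversely, the paper's construction makes the formula appear inevitable but leans on an external identity you do not need.

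For uniqueness the two arguments coincide: your separable equation for $w(u)$ is exactly the paper's equation for $u(v)$ read the other way, and the determination of the constant by the travel-time condition $x(-1/p)=H$ is precisely the step the paper carries out to pin down $C$ (and hence $\lambda$). The endpoint degeneracy you flag is handled in the paper only implicitly, at the same level of rigor as your sketch.
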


\begin{proof}
Let $u>0$ and $v:=u'$. We have
\[\frac{1}{u}\frac{du}{dv}
=\frac{(p+q)v}{pqv^2-(p-q)v-1}
=\frac{1/p}{v+1/p}+\frac{1/q}{v-1/q}.\]
Integrating the both-sides, we obtain general integral curves in the phase
plane for $(u,v)$
\begin{equation}
\label{eq:vppsi}
u=C\left|v+\frac{1}{p}\right|^{1/p}\left|v-\frac{1}{q}\right|^{1/q}
\end{equation}
with some constant $C>0$. 
We have to require that for each $C>0$ the above curve yields a solution satisfying
$(u,v)|_{x=0}=(0,1/q)$ and $(u,v)|_{x=H}=(0,-1/p)$.
Then, $-1/p \leq v \leq 1/q$, and it follows
from \eqref{eq:bvp} and \eqref{eq:vppsi} that 
\begin{equation}
\label{eq:psi'}
\frac{dv}{dx}=-\frac{pq}{C(p+q)}
\left(v+\frac{1}{p}\right)^{1/p^*}\left(\frac{1}{q}-v\right)^{1/q^*}.
\end{equation}
Thus,
\[x(v)=\int_{1/q}^v \frac{dx}{dv}\,dv
=C\left(\frac{1}{p}+\frac{1}{q}\right)
\int_v^{1/q} \left(v+\frac{1}{p}\right)^{-1/p^*}
\left(\frac{1}{q}-v\right)^{-1/q^*}\,dv.\]
Since $x(-1/p)=H$,
\begin{align*} 
H&=C \left(\frac{1}{p}+\frac{1}{q}\right) \int_{-1/p}^{1/q}
\left(v+\frac{1}{p}\right)^{-1/p^*}
\left(\frac{1}{q}-v\right)^{-1/q^*}
\,dv\\
&=C \left(\frac{1}{p}+\frac{1}{q}\right)^{1/p+1/q} 
B\left(\frac{1}{q},\frac{1}{p}\right);
\end{align*}
that is,
\begin{equation}
\label{eq:c}
C=\frac{2H}{p(1/p+1/q)^{1/p+1/q}\pi_{q^*,p}}.
\end{equation}

For this $C$, we seek the solution $v$ of \eqref{eq:psi'}
with $v(0)=1/q$ and $v(H)=-1/p$.
Setting $v+1/p=(1/p+1/q)w^p$ in \eqref{eq:psi'},
we have
\begin{equation*}
\frac{dw}{dx}=-\frac{\pi_{q^*,p}}{2H}
(1-w^p)^{1/q^*}.
\end{equation*}
Since $w(0)=1$, we obtain
\[\int_1^w \frac{dw}{(1-w^p)^{1/q^*}}=-\frac{\pi_{q^*,p}}{2H}x,
\quad x \in [0,H],\]
that is,
\[w=\sin_{q^*,p}{\frac{\pi_{q^*,p}(H-x)}{2H}}.\] 
It follows from \eqref{eq:sinpq} in Appendix that 
\[w=\cos_{p^*,q}^{p^*-1}{\left( \frac{\pi_{p^*,q}}{2H}x \right)}.\] 
Therefore, 
\begin{equation}
\label{eq:psi3}
v=-\frac{1}{p}+\left(\frac{1}{p}+\frac{1}{q}\right)
\cos_{p^*,q}^{p^*}{\left( \frac{\pi_{p^*,q}}{2H}x \right)}.
\end{equation}
Substituting \eqref{eq:c} and \eqref{eq:psi3} into \eqref{eq:vppsi} and using $p\pi_{q^*,p}=q\pi_{p^*,q}$, we have 
\[u=\frac{2H}{q\pi_{p^*,q}}\cos_{p^*,q}^{p^*-1}{\left(\frac{\pi_{p^*,q}}{2H}x\right)}
\sin_{p^*,q}{\left(\frac{\pi_{p^*,q}}{2H}x\right)}.\]
Thus, we conclude \eqref{eq:sol1}.
\end{proof}

According to Theorem \ref{thm:CINT}, 
it is possible to give an explicit expression of the solution of \eqref{eq:nbvp}
in terms of GTFs.

\begin{cor}
\label{cor:nbvp}
The set of all positive solutions of \eqref{eq:nbvp} is 
\[\left\{\vp:\vp=2\sqrt{m^2+\frac14}\,u_m,\ m>0\right\},\] 
where $u_m$ is the positive solution \eqref{eq:sol1} of \eqref{eq:bvp} with
\begin{equation}
\label{eq:pq}
p^*=q=r:=\left(\frac{1}{2}+\frac{1}{4\sqrt{m^2+1/4}}\right)^{-1} \in (1,2).
\end{equation}
\end{cor}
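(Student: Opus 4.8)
The plan is to \emph{freeze} the nonlocal term. If $\vp$ is any positive solution of \eqref{eq:nbvp}, then $\kappa:=\frac{2}{H}\int_0^H(\vp'(t))^2\,dt$ is a constant, and $\kappa>0$ since a positive solution is not identically zero (if $\kappa=0$ then $\vp'\equiv0$, so $\vp\equiv\vp(0)=0$). Thus $\vp$ solves the local problem $\vp'-(\vp')^2+\vp\vp''+\kappa=0$, $\vp(0)=\vp(H)=0$. This decoupling loses nothing: integrating the local equation over $[0,H]$ and using $\int_0^H\vp'\,dt=0$ and $\int_0^H\vp\vp''\,dt=[\vp\vp']_0^H-\int_0^H(\vp')^2\,dt=-\int_0^H(\vp')^2\,dt$ forces $\kappa=\frac{2}{H}\int_0^H(\vp')^2\,dt$ automatically. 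Hence the positive solutions of \eqref{eq:nbvp} are precisely the positive solutions of the local problems $\vp'-(\vp')^2+\vp\vp''+\kappa=0$, $\vp(0)=\vp(H)=0$, as $\kappa$ ranges over $(0,\infty)$, and it remains to solve these and match them with \eqref{eq:bvp}.

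Next I would rescale: writing $\vp=\mu u$ with a constant $\mu>0$ turns the local equation into $u'-\mu[(u')^2-uu'']+\kappa/\mu=0$. In \eqref{eq:bvp} under $p^*=q=r$ one has $p=r/(r-1)$, $q=r$, so $pq=p+q=r^2/(r-1)$ and $p-q=r(2-r)/(r-1)>0$, and \eqref{eq:bvp} becomes $(p-q)u'-pq[(u')^2-uu'']+1=0$. Comparing coefficients forces
\[\mu=\frac{pq}{p-q}=\frac{r}{2-r},\qquad \kappa=\frac{\mu}{p-q}=\frac{r-1}{(2-r)^2}.\]
Since $\frac{d}{dr}\frac{r-1}{(2-r)^2}=\frac{r}{(2-r)^3}>0$ and $\frac{r-1}{(2-r)^2}$ increases from $0$ to $\infty$ on $(1,2)$, each $\kappa>0$ determines a unique $r=r(\kappa)\in(1,2)$, and then $\mu=r/(2-r)$.

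I would then recover the free parameter by setting $m:=\sqrt{\kappa}>0$: a short computation gives $m^2+\tfrac14=\tfrac{r-1}{(2-r)^2}+\tfrac14=\tfrac{r^2}{4(2-r)^2}$, so $\sqrt{m^2+1/4}=\tfrac{r}{2(2-r)}$, which yields simultaneously $\mu=2\sqrt{m^2+1/4}$ and $\big(\tfrac12+\tfrac1{4\sqrt{m^2+1/4}}\big)^{-1}=r$, i.e. exactly \eqref{eq:pq}. Both inclusions then follow. If $\vp$ is a positive solution of \eqref{eq:nbvp}, take $\kappa=\tfrac2H\int_0^H(\vp')^2\,dt$ and the associated $r,\mu,m$; then $u:=\vp/\mu$ is a positive solution of \eqref{eq:bvp} with parameters \eqref{eq:pq}, so $u=u_m$ by Theorem \ref{thm:CINT}, hence $\vp=\mu u_m=2\sqrt{m^2+1/4}\,u_m$. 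Conversely, for $m>0$ let $r$ be given by \eqref{eq:pq} and put $\vp:=2\sqrt{m^2+1/4}\,u_m=\tfrac{r}{2-r}u_m$; reversing the coefficient comparison shows $\vp$ solves the local problem with $\kappa=m^2>0$, hence \eqref{eq:nbvp} by the identity of the first paragraph, and $\vp>0$ on $(0,H)$ with $\vp(0)=\vp(H)=0$.

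Apart from this, everything is routine bookkeeping, and I expect two steps to do the real work. The first is the a priori identity $\kappa=\tfrac2H\int_0^H(\vp')^2\,dt$ that makes the frozen local problem equivalent to \eqref{eq:nbvp}; without it one would instead verify the nonlocal constraint by computing $\int_0^H(u_m'(t))^2\,dt$, most cheaply by integrating \eqref{eq:bvp} over $[0,H]$ with $\int_0^H u_mu_m''\,dt=-\int_0^H(u_m')^2\,dt$ to get $\int_0^H(u_m')^2\,dt=\tfrac{H}{2pq}=\tfrac{H(r-1)}{2r^2}$, whence $\tfrac2H\int_0^H(\vp')^2\,dt=\tfrac{r-1}{(2-r)^2}=m^2$ (alternatively one may use \eqref{eq:psi3} to write $u_m'=\tfrac1r-\sin_r^r\!\big(\tfrac{\pi_r}{2H}x\big)$ and apply the $B$-function formula of the Introduction). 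The second is the \emph{uniqueness} of the positive solution of \eqref{eq:bvp} supplied by Theorem \ref{thm:CINT}, which is what lets us conclude $u=u_m$ in the first inclusion.
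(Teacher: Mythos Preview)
Your proof is correct and follows essentially the same route as the paper: freeze the nonlocal constant as $m^2$, observe (by integrating the equation over $[0,H]$ with $\int_0^H\vp\vp''=-\int_0^H(\vp')^2$) that the constraint is automatically satisfied, and rescale $\vp=\mu u$ to reduce to \eqref{eq:bvp} with $p^*=q=r$. Your version is in fact slightly more complete than the paper's: the paper argues only that each $\vp=2\sqrt{m^2+1/4}\,u_m$ solves \eqref{eq:nbvp}, whereas you also establish the converse inclusion by noting that $\kappa\mapsto r$ is a bijection $(0,\infty)\to(1,2)$ and invoking the uniqueness in Theorem~\ref{thm:CINT}.
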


\begin{proof}
Let $m>0$ be a parameter.
We consider instead of \eqref{eq:nbvp} the nonlinear boundary value problem
\begin{equation}
\label{eq:nbvp'}
\vp'-(\vp')^2+\vp\vp''+m^2=0,\quad
\vp(0)=\vp(H)=0.
\end{equation}
In case \eqref{eq:nbvp'} has a solution, which we denote by $\vp_m$,
then $\vp_m$ is nontrivial, i.e. nonconstant, because $m>0$. 
Moreover, integrating \eqref{eq:nbvp'} yields
\[m^2=\frac{2}{H}\int_0^H (\vp_m'(x))^2\,dx.\]
Consequently, $\vp_m$ is also a nontrivial solution of \eqref{eq:nbvp}. Therefore,
we will focus now on showing that \eqref{eq:nbvp'} has a nontrivial 
solution for every $m>0$ given.

Suppose that $p^*=q=r$, where $r$ is the number defined in \eqref{eq:pq}. 
Then, \eqref{eq:nbvp'} is equivalent to \eqref{eq:bvp}.
Indeed, setting $u=(1/q-1/p)\vp$ in \eqref{eq:bvp}, we have
\[\vp'-(\vp')^2+\frac{p+q}{pq} \vp\vp''
+\frac{pq}{(p-q)^2}=0;\]
so that \eqref{eq:nbvp'} follows from $(p+q)/(pq)=1$ and $pq/(p-q)^2=m^2$.
Thus, solution \eqref{eq:sol1}, say $u_m$, of \eqref{eq:bvp} 
gives the solution of \eqref{eq:nbvp'} as $\vp=(1/q-1/p)^{-1}\,u_m
=2\sqrt{m^2+1/4}\,u_m$. 
\end{proof}

The graphs of solutions $\vp=2\sqrt{m^2+1/4}\,u_m$ in Corollary \ref{cor:nbvp}
are given in Figure \ref{fig:sol_m}
by using \texttt{InverseBetaRegularized} command of Wolfram Mathematica 11,
because $\sin_{p,q}^{-1}{x}$ can be written in terms of the incomplete beta function:
\[\sin_{p,q}^{-1}{x}=\frac{1}{q}\int_0^{x^q}s^{1/q-1}(1-s)^{1/p^*-1}\,ds.\]

\begin{figure}[htbp]
\begin{center}
\includegraphics[width=4cm,clip]{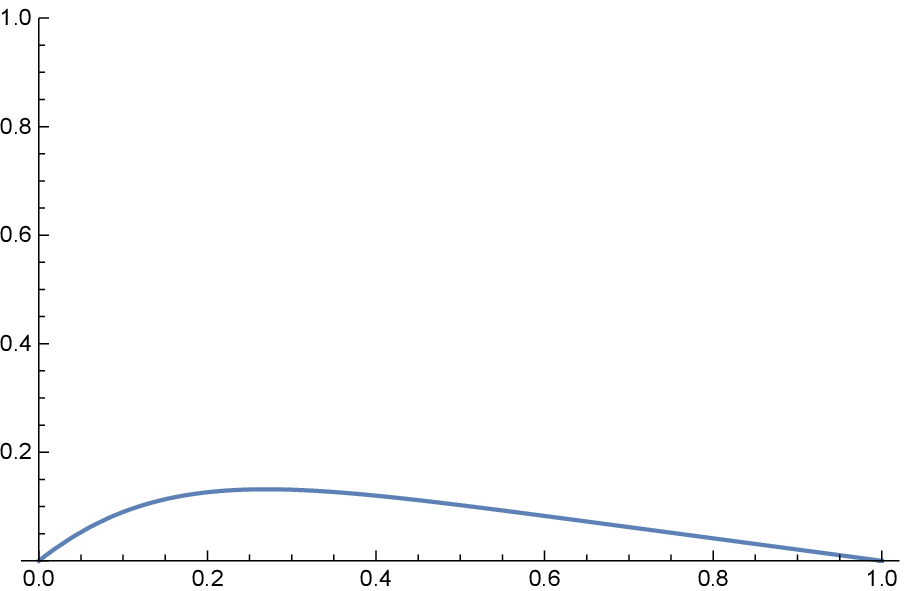}
\includegraphics[width=4cm,clip]{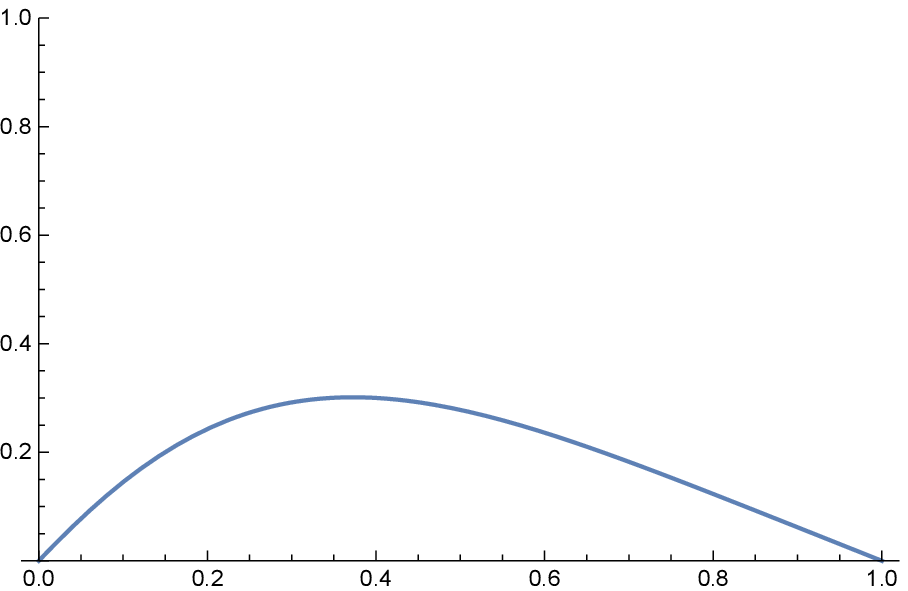}
\includegraphics[width=4cm,clip]{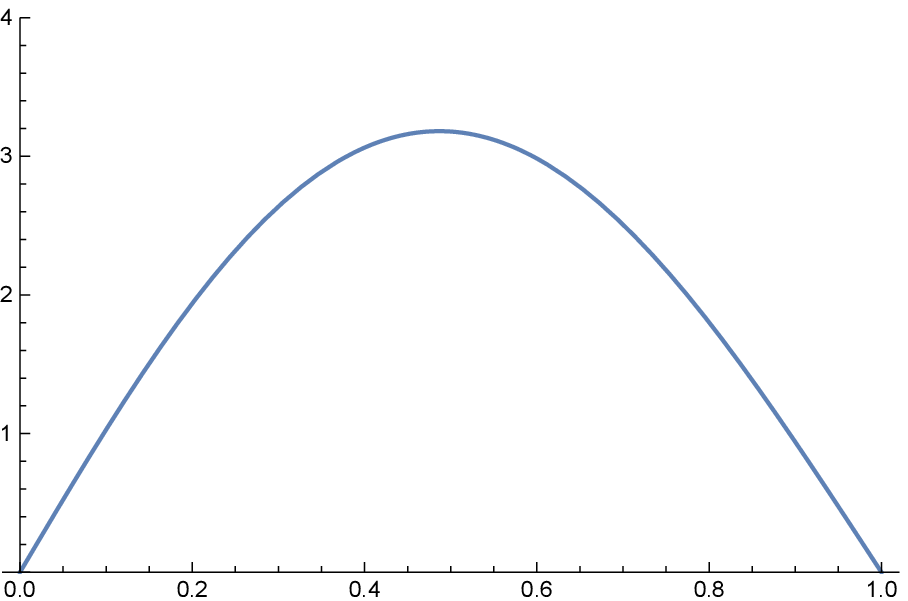}
\caption{Graphs of solutions of \eqref{eq:nbvp} with $H=1$ for $m=0.5,\ 1.0$ and $10.0$.}
\label{fig:sol_m}
\end{center}
\end{figure}

In case $p=q$, solution \eqref{eq:sol1} has a simple form. 

\begin{cor}
For $p \in (1,\infty)$, the positive solution of the boundary value problem
\begin{equation}
\label{eq:p=q}
-p^2(u')^2+2puu''+1=0, \quad u(0)=u(1)=0
\end{equation}
is
\[u=\frac{1}{p\pi_{2,p}}\sin_{2,p}{(\pi_{2,p}x)}.\]
Here, $\sin_{2,p}{(\pi_{2,p}x)}$ is defined in 
$[1/2,1]$ by $\sin_{2,p}{(\pi_{2,p}x)}=\sin_{2,p}{(\pi_{2,p}(1-x))}$.
In particular, the solution is symmetric with respect to $x=1/2$.
\end{cor}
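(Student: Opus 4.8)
The plan is to read the formula off Theorem~\ref{thm:CINT} and then compress the resulting expression by a ``double-angle'' identity for $\sin_{2,p}$. First, set $q=p$ and $H=1$ in \eqref{eq:bvp}: the term $(p-q)u'$ disappears and $-pq(u')^2+(p+q)uu''$ becomes $-p^2(u')^2+2puu''$, so \eqref{eq:p=q} is exactly \eqref{eq:bvp} with these parameters. Hence by Theorem~\ref{thm:CINT} its positive solution is
\[
u=\frac{2}{p\pi_{p^*,p}}\cos_{p^*,p}^{p^*-1}\!\Bigl(\frac{\pi_{p^*,p}}{2}x\Bigr)\sin_{p^*,p}\!\Bigl(\frac{\pi_{p^*,p}}{2}x\Bigr),\qquad x\in[0,1],
\]
and it remains to identify this with $\tfrac{1}{p\pi_{2,p}}\sin_{2,p}(\pi_{2,p}x)$.

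The crux is the identity
\[
\cos_{p^*,p}^{p^*-1}(y)\sin_{p^*,p}(y)=4^{-1/p}\sin_{2,p}\!\bigl(4^{1/p}y\bigr),\qquad y\in[0,\pi_{p^*,p}/2],
\]
with $\sin_{2,p}$ extended to $[\pi_{2,p}/2,\pi_{2,p}]$ by the reflection in the statement. I would prove it by the standard ``differentiate and recognize the defining ODE'' route. Write $f(y)$ for the left side, so $f(0)=0$. Using $(\sin_{p^*,p})'=\cos_{p^*,p}$, the relation $\cos_{p^*,p}^{p^*}+\sin_{p^*,p}^p=1$, and the derivative of $\cos_{p^*,p}^{p^*-1}$ (the $q=p$ instance of \eqref{eq:dcos}, where $(p^*-1)p/p^*=1$, so $(\cos_{p^*,p}^{p^*-1})'=-\sin_{p^*,p}^{p-1}$), one computes $f'=\cos_{p^*,p}^{p^*}-\sin_{p^*,p}^p=1-2s$ with $s:=\sin_{p^*,p}^p(y)$, while $(p^*-1)p=p^*$ gives $f^p=\cos_{p^*,p}^{p^*}\sin_{p^*,p}^p=s(1-s)$. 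The elementary identity $(1-2s)^2=1-4s(1-s)$ then reads $(f')^2=1-4f^p$. Setting $g:=4^{1/p}f$, so $g^p=4f^p$, we get $g(0)=0$ and $(g')^2=4^{2/p}(1-g^p)$; on the part where $s\le 1/2$ one has $g'\ge0$ and $g'=4^{1/p}(1-g^p)^{1/2}$, which is exactly the initial value problem solved by $y\mapsto\sin_{2,p}(4^{1/p}y)$, while on the part where $s\ge 1/2$ one has $g'=-4^{1/p}(1-g^p)^{1/2}$, the ODE obeyed by the reflection-extension of $\sin_{2,p}$ past its maximum. The two branches meet at the point where $s=1/2$, where $g=1$ matches $\sin_{2,p}(\pi_{2,p}/2)=1$; this proves the identity. (Alternatively, \eqref{eq:sinpq} of the Appendix gives $\cos_{p^*,p}^{p^*-1}(y)=\sin_{p^*,p}(\pi_{p^*,p}/2-y)$, exhibiting $f$ as a symmetric product of sines and shortening the reflection bookkeeping.)

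Next I would pin down the constants with the Legendre duplication formula. Since $(p^*)^*=p$ and $2^*=2$, we have $\pi_{p^*,p}=\tfrac2p B(\tfrac1p,\tfrac1p)$ and $\pi_{2,p}=\tfrac2p B(\tfrac12,\tfrac1p)$; writing these in gamma functions and using $\Gamma(\tfrac1p)\Gamma(\tfrac1p+\tfrac12)=2^{1-2/p}\sqrt{\pi}\,\Gamma(\tfrac2p)$ yields $\pi_{p^*,p}/\pi_{2,p}=2^{1-2/p}$, i.e. $4^{1/p}\cdot\tfrac{\pi_{p^*,p}}{2}=\pi_{2,p}$. Hence, taking $y=\tfrac{\pi_{p^*,p}}{2}x$ (so $4^{1/p}y=\pi_{2,p}x$) in the identity above,
\[
u=\frac{2}{p\pi_{p^*,p}}\cdot 4^{-1/p}\sin_{2,p}(\pi_{2,p}x)=\frac{2}{p\cdot 4^{1/p}\pi_{p^*,p}}\sin_{2,p}(\pi_{2,p}x)=\frac{1}{p\pi_{2,p}}\sin_{2,p}(\pi_{2,p}x).
\]
For $x\in[1/2,1]$ the argument $\pi_{2,p}x$ lies in $[\pi_{2,p}/2,\pi_{2,p}]$, matching the reflection definition in the statement, and the symmetry about $x=1/2$ is then immediate from $\sin_{2,p}(\pi_{2,p}x)=\sin_{2,p}(\pi_{2,p}(1-x))$ (equivalently, from the invariance of \eqref{eq:p=q} under $x\mapsto 1-x$ and uniqueness in Theorem~\ref{thm:CINT}).

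I expect the main obstacle to be the double-angle identity itself: discovering that the correct rescaling factor is $4^{1/p}$, which is precisely what forces the duplication-formula computation $4^{1/p}\pi_{p^*,p}=2\pi_{2,p}$, and checking that the two branches $f'\ge0$ and $f'\le0$ glue to the reflection-extension of $\sin_{2,p}$ across its maximum. The step that makes everything work is the algebraic coincidence $(1-2s)^2=1-4s(1-s)$, which turns $f'$ and $f^p$ into exactly the data of the $\sin_{2,p}$ initial value problem.
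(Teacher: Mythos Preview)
Your argument is correct and follows the same two-step strategy as the paper: specialize Theorem~\ref{thm:CINT} to $H=1$, $p=q$, and then collapse $\cos_{p^*,p}^{p^*-1}\sin_{p^*,p}$ into $\sin_{2,p}$ via a double-angle identity. The difference is only in how that identity is obtained. The paper simply quotes the multiple-angle formula $\sin_{2,p}(2^{2/p}x)=2^{2/p}\sin_{p^*,p}x\,\cos_{p^*,p}^{p^*-1}x$ from \cite[Theorem 1.1]{T2016b} (together with the built-in relation $\pi_{p^*,p}/2=\pi_{2,p}/2^{2/p}$), whereas you rederive it from scratch: the computation $(f')^2=1-4f^p$ via $(1-2s)^2=1-4s(1-s)$ recovers the defining ODE of $\sin_{2,p}$ after the rescaling $g=4^{1/p}f$, and the Legendre duplication formula independently produces $4^{1/p}\pi_{p^*,p}=2\pi_{2,p}$. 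Your route is thus self-contained and makes transparent why the factor $2^{2/p}=4^{1/p}$ appears, at the cost of more calculation; the paper's route is shorter but relies on an external reference for the key identity.
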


\begin{proof}
This problem corresponds to \eqref{eq:bvp} with $H=1$ and $p=q$.
Then, by Theorem \ref{thm:CINT},
the positive solution is
\[u=\frac{2}{p\pi_{p^*,p}}\cos_{p^*,p}^{p^*-1}{\left(\frac{\pi_{p^*,p}}{2}x\right)}
\sin_{p^*,p}{\left(\frac{\pi_{p^*,p}}{2}x\right)}.\]
Moreover, the multiple-angle formula \cite[Theorem 1.1]{T2016b} for GTFs:
for $x \in [0,\pi_{p^*,p}/2]=[0,\pi_{2,p}/2^{2/p}]$, 
\[\sin_{2,p}{(2^{2/p}x)}=2^{2/p}\sin_{p^*,p}{x}\cos_{p^*,p}^{p^*-1}{x},\]
yields
\[u=\frac{1}{p\pi_{2,p}}\sin_{2,p}{(\pi_{2,p}x)}.\]
Thus the assertion follows.
\end{proof}

The graphs of solutions of \eqref{eq:p=q} are given in Figure \ref{fig:sol_p}.

\begin{figure}[htbp]
\begin{center}
\includegraphics[width=4cm,clip]{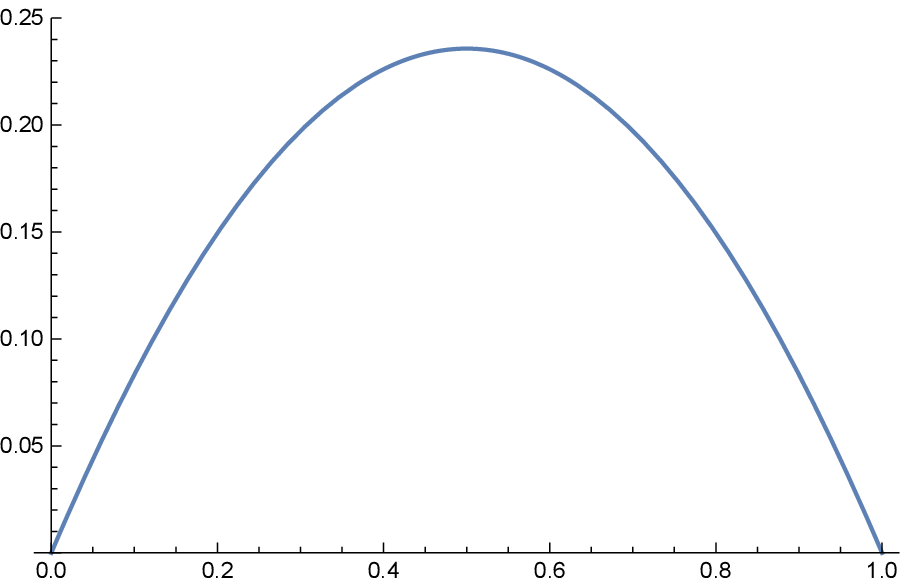}
\includegraphics[width=4cm,clip]{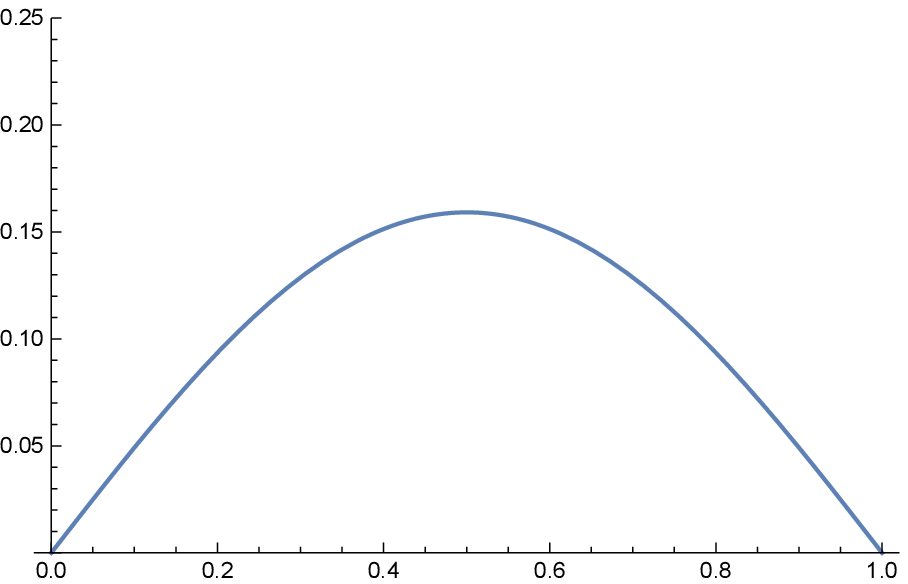}
\includegraphics[width=4cm,clip]{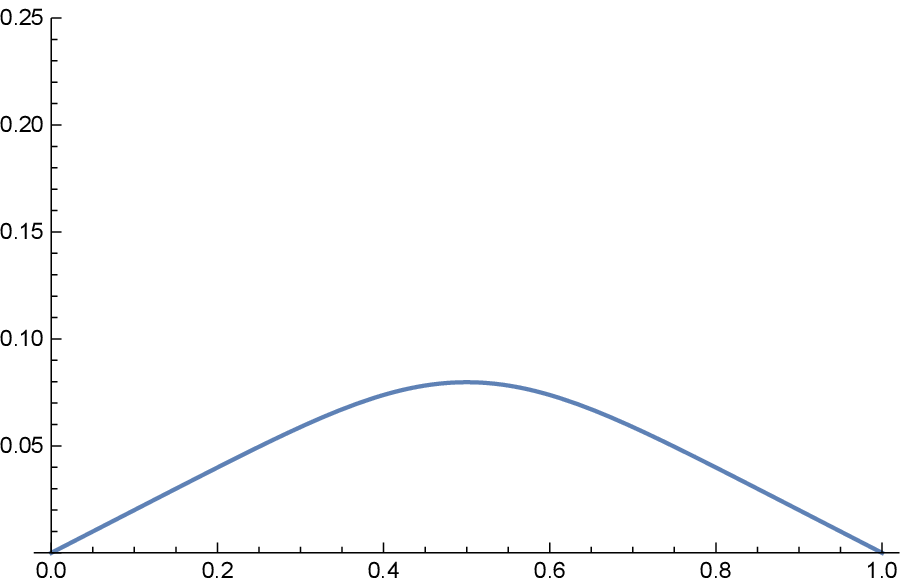}
\caption{Graphs of solutions of \eqref{eq:p=q} for $p=1.1,\ 2.0$ and $5.0$.}
\label{fig:sol_p}
\end{center}
\end{figure}

\section{Integral formulas and applications}

In this section we will give integral formulas involving primitive functions 
and Wallis-type formulas for GTFs. 
As applications, we obtain the counterparts for the lemniscate function $\slem{x}$ 
and the lemniscate constant $\varpi$. 

\begin{thm}
\label{thm:integration}
Let $p,\ q \in (1,\infty)$.
If $k>-1,\ \ell>1-p$ and $x \in [0, \pi_{p,q}/2]$, then
\begin{equation}
\label{eq:integration}
\int_0^x \sin_{p,q}^k{t} \cos_{p,q}^\ell{t}\,dt
=\frac{1}{k+1}\sin_{p,q}^{k+1}{x}\,
F\left(\frac{k+1}{q},\frac{1-\ell}{p};1+\frac{k+1}{q};\sin_{p,q}^q{x}\right).
\end{equation}
In particular,
\begin{equation}
\label{eq:definite}
\int_0^{\pi_{p,q}/2} \sin_{p,q}^k{t} \cos_{p,q}^\ell{t}\,dt
=\frac{1}{q}B\left(\frac{k+1}{q},1+\frac{\ell-1}{p}\right).
\end{equation}
\end{thm}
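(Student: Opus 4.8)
The plan is to reduce \eqref{eq:integration} to an (incomplete) beta-type integral by the substitution $s=\sin_{p,q}{t}$ and then expand by the binomial series to recognise the Gaussian hypergeometric function. First I would record that on $[0,\pi_{p,q}/2]$ the function $\sin_{p,q}$ is a $C^{1}$ increasing bijection onto $[0,1]$ with $(\sin_{p,q}{t})'=\cos_{p,q}{t}>0$ on $(0,\pi_{p,q}/2)$ and $\cos_{p,q}^{p}{t}=1-\sin_{p,q}^{q}{t}$. Writing $\cos_{p,q}^{\ell}{t}\,dt=\cos_{p,q}^{\ell-1}{t}\,(\cos_{p,q}{t}\,dt)=(1-s^{q})^{(\ell-1)/p}\,ds$ with $s=\sin_{p,q}{t}$, the left-hand side of \eqref{eq:integration} becomes $\int_{0}^{\sin_{p,q}{x}}s^{k}(1-s^{q})^{(\ell-1)/p}\,ds$, and a second substitution $\sigma=s^{q}$ turns this into $\tfrac1q\int_{0}^{\sin_{p,q}^{q}{x}}\sigma^{(k+1)/q-1}(1-\sigma)^{(\ell-1)/p}\,d\sigma$. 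The hypotheses $k>-1$ and $\ell>1-p$ are exactly what makes this integrand locally integrable at $\sigma=0$ and at $\sigma=1$; for $x<\pi_{p,q}/2$ the two changes of variable take place on a compact subinterval and need no comment.

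Next, for $y:=\sin_{p,q}^{q}{x}<1$ I would expand $(1-\sigma)^{(\ell-1)/p}=\sum_{n\ge0}\frac{((1-\ell)/p)_{n}}{n!}\sigma^{n}$ by the binomial series, which converges uniformly on $[0,y]$, integrate term by term, and use the identity $\frac{1}{a+n}=\frac1a\frac{(a)_{n}}{(a+1)_{n}}$ with $a:=(k+1)/q$ to collect the result as $\frac{y^{a}}{k+1}\sum_{n\ge0}\frac{(a)_{n}((1-\ell)/p)_{n}}{(1+a)_{n}\,n!}\,y^{n}$. Since $y^{a}=\sin_{p,q}^{k+1}{x}$, this is precisely the right-hand side of \eqref{eq:integration}. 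For \eqref{eq:definite} I would let $x\to\pi_{p,q}/2$, i.e.\ $y\to1$: the left-hand side converges (again because $\ell>1-p$), and the substituted integral gives $\tfrac1q\int_{0}^{1}\sigma^{(k+1)/q-1}(1-\sigma)^{(\ell-1)/p}\,d\sigma=\tfrac1q B\!\left(\frac{k+1}{q},1+\frac{\ell-1}{p}\right)$ straight from the definition of $B$, both arguments being positive. Equivalently one may pass to the limit on the right-hand side of \eqref{eq:integration} using continuity of $F(a,b;c;\,\cdot\,)$ at $1$ (valid since $c-a-b=1-(1-\ell)/p>0$) together with Gauss's summation $F(a,b;a+1;1)=\Gamma(a+1)\Gamma(1-b)/\Gamma(a+1-b)$.

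As for the main obstacle: there is essentially none. The only points deserving a line of justification are the validity of the two substitutions up to the singular endpoint $t=\pi_{p,q}/2$ — handled either by proving \eqref{eq:integration} first for $x<\pi_{p,q}/2$ and letting $x\to\pi_{p,q}/2$ using continuity in $x$ of both sides, or by monotone convergence since the integrand is positive — and the termwise integration of the binomial series, which is legitimate on $[0,y]$ with $y<1$ by uniform convergence. The bookkeeping with Pochhammer symbols needed to match the resulting series to $F$ is routine.
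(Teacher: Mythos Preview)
Your proof is correct and follows essentially the same route as the paper: both reduce the integral via the substitution to the incomplete beta integral $\tfrac{1}{q}\int_{0}^{\sin_{p,q}^{q}x}\sigma^{(k+1)/q-1}(1-\sigma)^{(\ell-1)/p}\,d\sigma$ and then identify this with $F$. The only cosmetic difference is that the paper quotes the standard identity $\int_{0}^{x}t^{a-1}(1-t)^{b-1}\,dt=\tfrac{x^{a}}{a}F(a,1-b;a+1;x)$ directly, whereas you re-derive it by termwise integration of the binomial series; for \eqref{eq:definite} the paper passes through Gauss's summation rather than going back to the $B$-integral, but you correctly note both options.
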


\begin{proof}
Letting $s=\sin_{p,q}^q{t}$,  we have
\begin{equation}
\label{eq:beta}
\int_0^x \sin_{p,q}^k{t} \cos_{p,q}^\ell{t}\,dt
=\frac{1}{q}\int_0^{\sin_{p,q}^q{x}} s^{(k+1)/q-1}(1-s)^{(\ell-1)/p}\,ds.
\end{equation}
Here, it is known that for $a,\ b>0$ and $0 \leq x \leq 1$, 
\[\int_0^x t^{a-1}(1-t)^{b-1}\,dt=\frac{x^a}{a}F(a,1-b;a+1;x)\]
(see \cite[6.6.8]{AS1964}, \cite[8.17.7]{OLBC2010} and \cite[p.41]{LE2011}).
Hence the right-hand side of \eqref{eq:beta} is 
\[\frac{1}{k+1}\sin_{p,q}^{k+1}{x}
F\left(\frac{k+1}{q},\frac{1-\ell}{p};1+\frac{k+1}{q};\sin_{p,q}^q{x}\right),\]
which implies \eqref{eq:integration}.

Next, letting $x=\pi_{p,q}/2$ in \eqref{eq:integration}, we obtain
\begin{equation}
\label{eq:teisekibun}
\int_0^{\pi_{p,q}/2} \sin_{p,q}^k{t} \cos_{p,q}^\ell{t}\,dt
=\frac{1}{k+1}F\left(\frac{k+1}{q},\frac{1-\ell}{p};1+\frac{k+1}{q};1\right).
\end{equation}
Here, it is also known that if $c>a+b$, then
\[F(a,b;c;1)=\frac{\Gamma(c)\Gamma(c-a-b)}{\Gamma(c-a)\Gamma(c-b)}\]
(see \cite[15.1.20]{AS1964}, \cite[15.4.20]{OLBC2010} and \cite[Theorem 2.2]{AAR1999}).
Here, $\Gamma(x)$ denotes the gamma function:
\[\Gamma(x)=\int_0^\infty e^{-t}t^{x-1}\,dt, \quad x>0.\]
It is well-known that $\Gamma(x+1)=x\Gamma(x)$ and 
$\Gamma(x)\Gamma(y)=\Gamma(x+y)B(x,y)$ for $x,\ y>0$.
Hence the right-hand side of \eqref{eq:teisekibun} is
\begin{align*}
\frac{1}{k+1}\frac{\Gamma(1+(k+1)/q) \Gamma (1-(1-\ell)/p)}
{\Gamma(1)\Gamma(1+(k+1)/q-(1-\ell)/p)}
&=\frac{1}{q}\frac{\Gamma((k+1)/q) \Gamma (1-(1-\ell)/p)}
{\Gamma(1+(k+1)/q-(1-\ell)/p)}\\
&=\frac{1}{q}B\left(\frac{k+1}{q},1+\frac{\ell-1}{p}\right),
\end{align*}
which implies \eqref{eq:definite}.
\end{proof}

\begin{rem}
When $\ell$ is a special value, the right-hand side of \eqref{eq:integration}
is a finite sum: for $k>-1$ and $n=0,1,2,\ldots$
\[\int_0^x \sin_{p,q}^k{t} \cos_{p,q}^{pn+1}{t}\,dt
=\sum_{m=0}^n \frac{(-1)^m}{k+1+qm} \binom{n}{m}\sin_{p,q}^{k+1+qm}{x}.\]
Indeed, the second parameter of $F$ in the 
right-hand side of \eqref{eq:integration} is $(1-\ell)/p=-n$,
and we see that $(-n)_m=(-1)^mm!\,\binom{n}{m}$ for $m \leq n$ 
and $(-n)_m=0$ for $m \geq n+1$. 
\end{rem}

\begin{cor}
\[\int_0^x \slem{t}\,dt
=\frac{1}{2}\sin^{-1}{(\slem^2{x})}.\]
In particular,
\[\int_0^{\varpi/2} \slem{t}\,dt
=\frac{\pi}{4}.\]
\end{cor}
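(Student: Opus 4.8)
The plan is to specialize Theorem~\ref{thm:integration}. Since $\slem{x}=\sin_{2,4}{x}$, I take $p=2$, $q=4$, $k=1$ and $\ell=0$; the hypotheses $k>-1$ and $\ell>1-p$ read $1>-1$ and $0>-1$, which hold, so \eqref{eq:integration} is valid for $x\in[0,\pi_{2,4}/2]=[0,\varpi/2]$. With these parameters the indices appearing in \eqref{eq:integration} are $(k+1)/q=1/2$, $(1-\ell)/p=1/2$ and $1+(k+1)/q=3/2$, so
\[\int_0^x \slem{t}\,dt=\frac12\slem^2{x}\,F\!\left(\frac12,\frac12;\frac32;\slem^4{x}\right).\]

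The only additional ingredient is the classical hypergeometric representation of the arcsine, $\sin^{-1}{z}=z\,F(1/2,1/2;3/2;z^2)$ for $|z|\le 1$. This is not genuinely new input here: it is exactly the formula $\int_0^\xi t^{a-1}(1-t)^{b-1}\,dt=(\xi^a/a)F(a,1-b;a+1;\xi)$ already used in the proof of Theorem~\ref{thm:integration} (with $a=b=1/2$ and $\xi=z^2$), combined with the substitution $t=\sigma^2$ in $\sin^{-1}{z}=\int_0^z(1-\sigma^2)^{-1/2}\,d\sigma$. Putting $z=\slem^2{x}\in[0,1]$ and cancelling the common factor $\slem^2{x}$ (the apparent singularity at $x=0$ being removable) yields
\[\int_0^x \slem{t}\,dt=\frac12\sin^{-1}{(\slem^2{x})},\]
which is the first assertion.

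For the definite integral I would evaluate at $x=\varpi/2=\pi_{2,4}/2$. Since $\sin_{2,4}$ increases on $[0,\pi_{2,4}/2]$ onto $[0,1]$, we have $\slem{(\varpi/2)}=1$, hence $\int_0^{\varpi/2}\slem{t}\,dt=\tfrac12\sin^{-1}{1}=\pi/4$. Alternatively, the same value drops out of \eqref{eq:definite} with the identical parameters: $\tfrac14 B(\tfrac12,\tfrac12)=\tfrac14\Gamma(\tfrac12)^2=\pi/4$. I do not expect any real obstacle; the argument is essentially a matter of matching indices, and the one named identity invoked is standard and, as noted, already implicit in the proof of Theorem~\ref{thm:integration}.
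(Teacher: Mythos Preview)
Your proposal is correct and follows essentially the same approach as the paper: specialize Theorem~\ref{thm:integration} with $(p,q,k,\ell)=(2,4,1,0)$ and invoke the identity $F(1/2,1/2;3/2;z^2)=z^{-1}\sin^{-1}{z}$. Your write-up is more detailed---verifying the hypotheses, remarking on the removable singularity at $x=0$, and offering the alternative computation of the definite integral via \eqref{eq:definite}---but the underlying argument is identical to the paper's.
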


\begin{proof}
Let $(p,q,k,\ell)=(2,4,1,0)$ in Theorem \ref{thm:integration},
and use $F(1/2,1/2;3/2;z^2)=z^{-1}\sin^{-1}z$.
\end{proof}

In the remainder of this section, we will construct the $(p,q)$-version of 
Wallis formulas. In what follows, we define
$1^*:=\infty,\ \pi_{s,1}:=2s^*$ and $\pi_{\infty,s}:=2$ for $s \in (1,\infty)$.

\begin{thm}
\label{thm:wallis_integration}
Let $p,\ q \in (1,\infty)$ and $n=0,1,2,\ldots$. Then, for $r \in (-1,q-1]$,
\begin{equation}
\label{eq:wallis_sin}
\int_0^{\pi_{p,q}/2} \sin_{p,q}^{qn+r}{t}\,dt
=\frac{u(1/u)_n}{q(1/p^*+1/u)_n}\frac{\pi_{p,u}}{2}, \quad \frac{1}{u}:=\frac{r+1}{q};
\end{equation}
for $r \in (1-p,1]$,
\begin{equation}
\label{eq:wallis_cos}
\int_0^{\pi_{p,q}/2} \cos_{p,q}^{pn+r}{t}\,dt
=\frac{(1/v)_n}{(1/v+1/q)_n}\frac{\pi_{v^*,q}}{2}, \quad \frac{1}{v}:=\frac{r+p-1}{p}.
\end{equation}
\end{thm}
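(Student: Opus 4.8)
The plan is to read off both identities from the definite-integral formula \eqref{eq:definite} of Theorem~\ref{thm:integration} with suitably chosen exponents, and then to rewrite the resulting beta value using the functional equation $\Gamma(x+1)=x\Gamma(x)$ together with the identity $\pi_{p,q}=\frac{2}{q}B\left(\frac{1}{p^*},\frac{1}{q}\right)$ recorded in the Introduction. No genuinely hard step is involved; the work is bookkeeping.

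For \eqref{eq:wallis_sin} I would apply \eqref{eq:definite} with $k=qn+r$ and $\ell=0$. The hypotheses of Theorem~\ref{thm:integration} hold: $\ell=0>1-p$ since $p>1$, and $k=qn+r>-1$ since $r>-1$ (this is the assumption when $n=0$, while $qn+r>q-1>0$ when $n\ge1$). Because $1+\frac{\ell-1}{p}=1-\frac{1}{p}=\frac{1}{p^*}$, this gives
\[
\int_0^{\pi_{p,q}/2}\sin_{p,q}^{qn+r}{t}\,dt=\frac{1}{q}B\left(n+\frac{1}{u},\frac{1}{p^*}\right),\qquad \frac{1}{u}=\frac{r+1}{q}.
\]
Writing the beta function as a ratio of gammas and peeling off $n$ factors by repeated use of $\Gamma(x+1)=x\Gamma(x)$ gives $\Gamma(n+1/u)=(1/u)_n\,\Gamma(1/u)$ and $\Gamma(n+1/u+1/p^*)=(1/p^*+1/u)_n\,\Gamma(1/p^*+1/u)$, hence
\[
B\left(n+\frac{1}{u},\frac{1}{p^*}\right)=\frac{(1/u)_n}{(1/p^*+1/u)_n}\,B\left(\frac{1}{u},\frac{1}{p^*}\right)=\frac{(1/u)_n}{(1/p^*+1/u)_n}\cdot\frac{u\,\pi_{p,u}}{2},
\]
where the last equality is just $B(1/u,1/p^*)=\frac{u}{2}\pi_{p,u}$. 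Multiplying by $1/q$ yields \eqref{eq:wallis_sin}.

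For \eqref{eq:wallis_cos} I would instead apply \eqref{eq:definite} with $k=0$ and $\ell=pn+r$; here $k=0>-1$ and $\ell=pn+r>1-p$ (the assumption when $n=0$, and $pn+r>p+1-p=1>1-p$ when $n\ge1$), so the hypotheses again hold. Since $1+\frac{\ell-1}{p}=1+n+\frac{r-1}{p}=n+\frac{1}{v}$ with $\frac{1}{v}=\frac{r+p-1}{p}$, this gives $\int_0^{\pi_{p,q}/2}\cos_{p,q}^{pn+r}{t}\,dt=\frac{1}{q}B\left(\frac{1}{q},n+\frac{1}{v}\right)$, and the identical gamma-peeling step, combined with $B(1/v,1/q)=\frac{q}{2}\pi_{v^*,q}$ (valid because $(v^*)^*=v$), turns the right-hand side into $\frac{1}{q}\cdot\frac{(1/v)_n}{(1/v+1/q)_n}\cdot\frac{q}{2}\pi_{v^*,q}$, which is \eqref{eq:wallis_cos}.

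Two points deserve care rather than presenting any real obstacle. First, one must verify, as above, that the exponents $qn+r$ and $pn+r$ stay inside the admissible range of Theorem~\ref{thm:integration} for every $n\ge0$. Second, at the endpoints $r=q-1$ and $r=1-p$ the parameters degenerate to $u=1$ and $v=1$ (so $v^*=\infty$); there one checks that $B(1,1/p^*)=p^*$ and $B(1/q,1)=q$ reproduce the stated formulas under the conventions $\pi_{s,1}:=2s^*$ and $\pi_{\infty,s}:=2$, so the identities remain valid as limiting cases. I do not anticipate any genuine difficulty.
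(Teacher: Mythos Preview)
Your proof is correct and takes a shorter route than the paper's. The paper proceeds in classical Wallis fashion: it uses integration by parts (via the derivative formula \eqref{eq:dcos}) to derive the one-step recurrences $I_k=\frac{k-q+1}{q/p^*+k-q+1}I_{k-q}$ and $J_\ell=\frac{\ell-1}{\ell-1+p/q}J_{\ell-p}$, iterates these $n$ times, and only then invokes \eqref{eq:definite} to evaluate the base cases $I_r$ and $J_r$. You instead apply \eqref{eq:definite} once with the full exponent and transfer the recursion to the gamma side, where it is a one-line consequence of $\Gamma(x+n)=(x)_n\Gamma(x)$. Your argument is more economical and needs no integration by parts; the paper's argument, on the other hand, mirrors the textbook derivation of the classical Wallis product and makes the recurrence structure explicit, which is in fact reused in the proof of Theorem~\ref{thm:wallis_product_general}. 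One small slip: in your endpoint discussion the degenerate case for \eqref{eq:wallis_cos} occurs at $r=1$ (where $v=1$, hence $v^*=\infty$), not at $r=1-p$, which lies outside the stated range; the rest of that check is fine.
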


\begin{proof}
We define $I_k$ and $J_\ell$ as
\begin{align}
\label{eq:I_k}
I_k&:=\int_0^{\pi_{p,q}/2}\sin_{p,q}^k{t}\,dt, \quad k>-1,\\
J_\ell &:=\int_0^{\pi_{p,q}/2}\cos_{p,q}^\ell{t}\,dt, \quad \ell>1-p. \notag
\end{align}
Let $k>q-1$. Then, \eqref{eq:dcos} yields
\begin{align*}
I_k
&=\int_0^{\pi_{p,q}/2} \sin_{p,q}^{k-q+1}{t}\cdot  \frac{d}{dt}\left(-\frac{p^*}{q}\cos_{p,q}^{p-1}{t}\right)\,dt\\
&=\left[-\frac{p^*}{q}\sin_{p,q}^{k-q+1}{t}\cos_{p,q}^{p-1}{t}\right]_0^{\pi_{p,q}/2}
+\frac{p^*}{q}(k-q+1)\int_0^{\pi_{p,q}/2} \sin_{p,q}^{k-q}{t}\cos_{p,q}^p{t}\,dt\\
&=\frac{p^*}{q}(k-q+1)(I_{k-q}-I_k).
\end{align*}
Therefore,
\[I_k=\frac{k-q+1}{q/p^*+k-q+1}I_{k-q}.\]
In particular, setting $k=qn+r,\ n=1,2,\ldots$ and $r \in (-1,q-1]$, we have
\begin{align*}
I_{qn+r}
&=\frac{q(n-1)+r+1}{q/p^*+q(n-1)+r+1}I_{q(n-1)+r}\\
&=\frac{n-1+1/u}{1/p^*+n-1+1/u}\frac{n-2+1/u}{1/p^*+n-2+1/u}
\cdots \frac{1/u}{1/p^*+1/u}I_r,
\end{align*}
where $1/u=(r+1)/q \in (0,1]$.
It follows from \eqref{eq:definite} that $I_r=u\pi_{p,u}/(2q)$. Thus,
\begin{align*}
I_{qn+r}
&=\frac{u(1/u)_n}{q(1/p^*+1/u)_n}\frac{\pi_{p,u}}{2}.
\end{align*}

In a similar way, for $\ell >1$ we obtain
\[J_\ell=\frac{\ell-1}{\ell-1+p/q}J_{\ell-p}.\]
Letting $\ell=pn+r,\ n=1,2,\ldots$ and $r \in (1-p,1]$, we get
$J_r=\pi_{v^*,q}/2$ and
\[J_{pn+r}=\frac{(1/v)_n}{(1/v+1/q)_n}\frac{\pi_{v^*,q}}{2},\]
where $1/v=(r+p-1)/p \in (0,1]$.
\end{proof}

\begin{cor}
\label{cor:wallis_integration}
Let $p,\ q \in (1,\infty)$ and $n=0,1,2,\ldots$. Then,
\begin{align}
\label{eq:qn}
\int_0^{\pi_{p,q}/2} \sin_{p,q}^{qn}{t}\,dt
&=\frac{(1/q)_n}{(1/p^*+1/q)_n}\frac{\pi_{p,q}}{2},\\
\int_0^{\pi_{p,q}/2} \sin_{p,q}^{qn+q-2}{t}\,dt
&=\frac{(1/q^*)_n}{(q-1)(1/p^*+1/q^*)_n}\frac{\pi_{p,q^*}}{2}, \notag\\
\int_0^{\pi_{p,q}/2} \sin_{p,q}^{qn+q-1}{t}\,dt
&=\frac{(1)_n}{(1/p^*+1)_n}\frac{p^*}{q}; \notag
\end{align}
and
\begin{align*}
\int_0^{\pi_{p,q}/2} \cos_{p,q}^{pn}{t}\,dt
&=\frac{(1/p^*)_n}{(1/p^*+1/q)_n}\frac{\pi_{p,q}}{2},\\
\int_0^{\pi_{p,q}/2} \cos_{p,q}^{pn+2-p}{t}\,dt
&=\frac{q(1/p)_n}{(1/p+1/q)_n}\frac{\pi_{p^*,q}}{2},\\
\int_0^{\pi_{p,q}/2} \cos_{p,q}^{pn+1}{t}\,dt
&=\frac{(1)_n}{(1+1/q)_n}.
\end{align*}
\end{cor}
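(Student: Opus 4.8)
The plan is to derive all six identities as specializations of Theorem~\ref{thm:wallis_integration}, choosing in each case the free exponent $r$ so that the auxiliary parameter takes a convenient value, and then simplifying the resulting closed form. Concretely, I would take $r=0$, $r=q-2$, and $r=q-1$ in \eqref{eq:wallis_sin} to obtain the three $\sin_{p,q}$-identities, and $r=0$, $r=2-p$, and $r=1$ in \eqref{eq:wallis_cos} to obtain the three $\cos_{p,q}$-identities. The first point to check is that each of these six values of $r$ is admissible, i.e. lies in $(-1,q-1]$ for the sine formula and in $(1-p,1]$ for the cosine formula; this follows at once from $p,q>1$.

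Next comes the bookkeeping. For the sine formula, $1/u=(r+1)/q$ gives $u=q$ when $r=0$, $u=q^*$ when $r=q-2$ (using $(q-1)/q=1/q^*$), and $u=1$ when $r=q-1$; for the cosine formula, $1/v=(r+p-1)/p$ gives $v=p^*$ when $r=0$, $v=p$ when $r=2-p$, and $v=1$ when $r=1$. I would then substitute these into the right-hand sides of \eqref{eq:wallis_sin} and \eqref{eq:wallis_cos}, using $(p^*)^*=p$ and $(q^*)^*=q$ to rewrite the $\pi$-factors, e.g. $\pi_{p,u}=\pi_{p,q^*}$ when $u=q^*$ and $\pi_{v^*,q}=\pi_{p,q}$ when $v=p^*$, and simplifying prefactors such as $u(1/u)_n/q=q^*(1/q^*)_n/q=(1/q^*)_n/(q-1)$. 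For the two degenerate cases $u=1$ and $v=1$ the $\pi$-symbols are no longer given by the integral defining $\pi_{p,q}$ but by the conventions fixed just before the theorem, namely $1^*=\infty$, $\pi_{s,1}=2s^*$, and $\pi_{\infty,s}=2$; in particular $r=q-1$ yields $\pi_{p,1}/2=p^*$, which is the origin of the factor $p^*/q$ in the third sine identity, and $r=1$ yields $\pi_{\infty,q}/2=1$, which is why the last cosine identity carries no $\pi$-factor.

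No step here is hard: each identity is a single substitution into an already-proved formula followed by elementary algebra. The only thing that genuinely requires care is keeping the three distinct constants $\pi_{p,q}$, $\pi_{p,q^*}$, $\pi_{p^*,q}$ apart and correctly invoking the boundary conventions in the two degenerate cases, together with double-checking the numerical prefactor in each line (the cosine family in particular, since the recursion $J_\ell=\tfrac{\ell-1}{\ell-1+p/q}J_{\ell-p}$ contributes the factor $(1/v)_n/(1/v+1/q)_n$ and one must be sure it is fed the correct base value $J_r=\pi_{v^*,q}/2$).
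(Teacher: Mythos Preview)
Your proposal is correct and follows exactly the paper's own approach: the paper's proof simply states that the sine formulas come from \eqref{eq:wallis_sin} with $r=0,\ q-2,\ q-1$ and the cosine formulas from \eqref{eq:wallis_cos} with $r=0,\ 2-p,\ 1$, without writing out any of the bookkeeping. Your expanded treatment of the admissibility checks, the values of $u$ and $v$, the simplification of prefactors, and the use of the boundary conventions $\pi_{s,1}=2s^*$ and $\pi_{\infty,s}=2$ is all accurate and merely fills in what the paper leaves implicit.
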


\begin{proof}
The formulas for $\sin_{p,q}$ follow from \eqref{eq:wallis_sin} with
$r=0,\ q-2$ and $q-1$. 
The formulas for $\cos_{p,q}$ follow from \eqref{eq:wallis_cos} with
$r=0,\ 2-p$ and $1$. 
\end{proof}

\begin{rem}
Since $\sin_{p^*,p}{t}=\cos_{p^*,p}^{p^*-1}{(\pi_{p^*,p}/2-t)}$, 
shown in \cite{T2016b}, we see that for all $k>-1$
\[\int_0^{\pi_{p^*,p}/2} \sin_{p^*,p}^k{t}\,dt
=\int_0^{\pi_{p^*,p}/2} \cos_{p^*,p}^{(p^*-1)k}{t}\,dt.\]
\end{rem}

\begin{rem}
As in \cite{T2016c}, using the series expansion and the termwise integration with \eqref{eq:qn}, 
we can give the hypergeometric expansion of generalized complete elliptic integrals:
\begin{align*}
K_{p,q,r}(k)
&:=\int_0^{\pi_{p,q}/2} \frac{d\theta}{(1-k^q\sin_{p,q}^q{\theta})^{1/r}}
=\frac{\pi_{p,q}}{2}F\left(\frac{1}{q},\frac{1}{r};\frac{1}{p^*}+\frac{1}{q};k^q\right),\\
E_{p,q,r}(k)
&:=\int_0^{\pi_{p,q}/2} (1-k^q\sin_{p,q}^q{\theta})^{1/r^*}\,d\theta
=\frac{\pi_{p,q}}{2}F\left(\frac{1}{q},-\frac{1}{r^*};\frac{1}{p^*}+\frac{1}{q};k^q\right).
\end{align*} 
In addition, it is known that $K_{p,q,r}(k)$ and $E_{p,q,r}(k)$ satisfy \textit{Elliott's identity}:
\begin{multline*}
E_{p,q,r^*}(k)K_{p,r,q^*}(k')
+K_{p,q,r^*}(k)E_{p,r,q^*}(k')
-K_{p,q,r^*}(k)K_{p,r,q^*}(k')
=\frac{\pi_{p,q}\pi_{s,r}}{4},
\end{multline*}
where $k':=(1-k^q)^{1/r}$ and $1/s=1/p-1/q$. 
This is a generalization of Legendre's relation.
For more details we refer
the reader to \cite{T2016c} (in which the definition of $E_{p,q,r}(k)$ is slightly 
different from the above one) and the references given there.
\end{rem}

\begin{cor}
For $n=1,2,\ldots$,
\begin{align}
\int_0^{\pi/2} \sin^{2n}{t}\,dt
&=\int_0^{\pi/2} \cos^{2n}{t}\,dt
=\frac12\cdot\frac34\cdot\frac56\cdot\cdots\cdot\frac{2n-1}{2n}\cdot\frac{\pi}{2}, \notag \\
\int_0^{\pi/2} \sin^{2n+1}{t}\,dt
&=\int_0^{\pi/2} \cos^{2n+1}{t}\,dt
=\frac23\cdot\frac45\cdot\frac67\cdot\cdots\cdot\frac{2n}{2n+1},\notag \\
\int_0^{\varpi/2} \slem^{4n}{t}\,dt
&=\frac{1}{3}\cdot\frac{5}{7}\cdot\frac{9}{11}\cdot\cdots\cdot\frac{4n-3}{4n-1}\cdot\frac{\varpi}{2},\notag \\
\int_0^{\varpi/2} \slem^{4n+1}{t}\,dt
&=\frac{1}{2}\cdot\frac{3}{4}\cdot\frac{5}{6}\cdot\cdots\cdot\frac{2n-1}{2n}\cdot\frac{\pi}{4}, \label{eq:sl4n+1} \\
\int_0^{\varpi/2} \slem^{4n+2}{t}\,dt
&=\frac{3}{5}\cdot\frac{7}{9}\cdot\frac{11}{13}\cdot\cdots\cdot\frac{4n-1}{4n+1}\cdot\frac{\pi}{2\varpi}, \label{eq:sl} \\
\int_0^{\varpi/2} \slem^{4n+3}{t}\,dt
&=\frac{4}{7}\cdot\frac{8}{11}\cdot\frac{12}{15}\cdot\cdots\cdot\frac{4n}{4n+3}\cdot\frac{1}{2}.\notag
\end{align}
\end{cor}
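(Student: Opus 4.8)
The plan is to read every line of the corollary off Theorem~\ref{thm:wallis_integration} by a suitable choice of parameters and then to rewrite the resulting quotient of Pochhammer symbols as the displayed telescoping product; the only input not already contained in Theorem~\ref{thm:wallis_integration} will be the closed-form value of one ``exotic'' generalized $\pi$.

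For the two classical lines I would set $p=q=2$ in \eqref{eq:wallis_sin}. With $r=0$ one gets $1/u=1/2$, hence $u=2$ and $\pi_{2,2}=\pi$, so the right-hand side is $\frac{(1/2)_n}{(1)_n}\cdot\frac{\pi}{2}$, and $\frac{(1/2)_n}{(1)_n}=\prod_{j=1}^{n}\frac{2j-1}{2j}$ is the Wallis product. With $r=1=q-1$ one gets $1/u=1$, so $u=1$, and by the convention $\pi_{2,1}=2\cdot 2^{*}=4$ the right-hand side collapses to $\frac{(1)_n}{(3/2)_n}=\prod_{j=1}^{n}\frac{2j}{2j+1}$. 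The $\cos$ halves of these two lines then follow immediately from the substitution $t\mapsto\pi/2-t$, which gives $\int_0^{\pi/2}\cos^{m}t\,dt=\int_0^{\pi/2}\sin^{m}t\,dt$ (alternatively, from \eqref{eq:wallis_cos} with $p=q=2$ together with $\pi_{\infty,2}=2$).

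For the lemniscate lines I would use $p=2$, $q=4$ in \eqref{eq:wallis_sin}, so that $\slem^{4n+r}=\sin_{2,4}^{4n+r}$ and $1/p^{*}=1/2$. The cases $r=0,1,3$ are pure specializations: $r=0$ gives $u=4$, $\pi_{2,4}=\varpi$, and the factor $\frac{(1/4)_n}{(3/4)_n}=\prod_{j=1}^{n}\frac{4j-3}{4j-1}$; $r=1$ gives $u=2$, $\pi_{2,2}=\pi$, the factor $\frac{(1/2)_n}{(1)_n}=\prod_{j=1}^{n}\frac{2j-1}{2j}$, and the base constant $\int_0^{\varpi/2}\slem\,t\,dt=\pi/4$ recorded earlier in the paper; $r=3$ gives $u=1$, $\pi_{2,1}=4$, the base constant $\int_0^{\varpi/2}\slem^{3}t\,dt=1/2$ from \eqref{eq:definite}, and a Pochhammer quotient of the form $(1)_n/(3/2)_n$ that telescopes to the claimed finite product.

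The only line that requires genuine computation is $r=2$, where $1/u=3/4$, i.e.\ $u=4/3$, so one must evaluate $\pi_{2,4/3}$ in closed form. Here I would write $\pi_{2,4/3}=\frac{3}{2}B(1/2,3/4)=\frac{3}{2}\cdot\frac{\Gamma(1/2)\Gamma(3/4)}{\Gamma(5/4)}$ and then use $\Gamma(5/4)=\frac14\Gamma(1/4)$, the reflection formula $\Gamma(1/4)\Gamma(3/4)=\pi/\sin(\pi/4)=\pi\sqrt2$, and the classical value $\varpi=\Gamma(1/4)^{2}/(2\sqrt{2\pi})$ to obtain $\pi_{2,4/3}=3\pi/\varpi$ (equivalently, one may bypass $\Gamma(1/4)$ altogether via the one-line cancellation $B(1/4,1/2)\,B(3/4,1/2)=4\pi$, which yields $\int_0^{\varpi/2}\slem^{2}t\,dt=\pi/(2\varpi)$). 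Feeding this value into \eqref{eq:wallis_sin} collapses the prefactor and leaves $\frac{(3/4)_n}{(5/4)_n}\cdot\frac{\pi}{2\varpi}$, with $\frac{(3/4)_n}{(5/4)_n}=\prod_{j=1}^{n}\frac{4j-1}{4j+1}$. I expect this Beta/Gamma bookkeeping --- in particular recognizing $B(1/2,3/4)$ in terms of $\varpi$ --- to be the one non-mechanical step; everything else is a specialization of Theorem~\ref{thm:wallis_integration} together with the elementary rewriting of $(a)_n/(a+c)_n$ as a finite product.
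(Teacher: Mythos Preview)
Your approach is essentially identical to the paper's: every line is a specialization of Theorem~\ref{thm:wallis_integration} (equivalently Corollary~\ref{cor:wallis_integration}) at $(p,q)=(2,2)$ or $(2,4)$ with the appropriate $r$, and the only genuinely non-mechanical step is the evaluation of $\pi_{2,4/3}$ in the $\slem^{4n+2}$ line. The paper reaches $\pi_{2,4/3}=3\pi/\varpi$ entirely through beta-function identities --- symmetry to reduce to $\pi_{4,2}$, then the three-term relation $B(x,y)B(x+y,z)=B(y,z)B(y+z,x)$ and the recursion $(x+y)B(1+x,y)=xB(x,y)$ --- whereas you pass through $\Gamma(1/4)$ via the reflection formula and the classical closed form $\varpi=\Gamma(1/4)^2/(2\sqrt{2\pi})$. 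Both routes are valid; yours is shorter but imports an outside identity for $\varpi$, while the paper's stays self-contained within beta-function algebra.

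One caution: for $r=3$ you correctly obtain the Pochhammer quotient $(1)_n/(3/2)_n$, but that equals $\prod_{j=1}^{n}\frac{2j}{2j+1}$, \emph{not} the printed product $\prod_{j=1}^{n}\frac{4j}{4j+3}$. Your claim that it ``telescopes to the claimed finite product'' therefore does not hold as written; a direct check via \eqref{eq:definite} gives, e.g., $\int_0^{\varpi/2}\slem^{7}t\,dt=\tfrac{1}{4}B(2,\tfrac12)=\tfrac13$, whereas the displayed formula would give $\tfrac{4}{7}\cdot\tfrac12=\tfrac{2}{7}$. So the method is right but the target line in the statement appears to be misprinted, and you should not have glossed over that final verification.
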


\begin{proof}
In order to prove \eqref{eq:sl4n+1}, we apply \eqref{eq:wallis_sin} with $(p,q,r)=(2,4,1)$.
All the results, apart from \eqref{eq:sl4n+1}, come from Corollary \ref{cor:wallis_integration} 
for $(p,q)=(2,2)$ and $(2,4)$. In particular,
for \eqref{eq:sl} we obtain
\[\int_0^{\varpi/2} \slem^{4n+2}{t}\,dt
=\frac{3}{5}\cdot\frac{7}{9}\cdot\frac{11}{13}\cdot\cdots\cdot\frac{4n-1}{4n+1}\cdot \frac{\pi_{2,4/3}}{6}.\]
It suffices to show that
\[\pi_{2,4/3}=\frac{3\pi}{\varpi}.\]
By the symmetry of the beta function, $\pi_{2,4/3}=3\pi_{4,2}/2$. Moreover, the formula
$B(x,y)B(x+y,z)=B(y,z)B(y+z,x)$ with $(x,y,z)=(3/4,1/2,1/2)$ yields
$\pi_{4,2}B(5/4,1/2)=4\pi/3$. Here, the formula
$(x+y)B(1+x,y)=xB(x,y)$ with $(x,y)=(1/4,1/2)$
gives
$B(5/4,1/2)=2\pi_{2,4}/3=2\varpi/3$; so that $\pi_{4,2}=2\pi/\varpi$.
Consequently, we conclude $\pi_{2,4/3}=3\pi/\varpi$.
\end{proof}

\begin{thm}
\label{thm:wallis_product_general}
Let $p,\ q \in (1,\infty)$. Then,
\[\frac{\pi_{p,q}}{2}
=\prod_{n=1}^\infty \left(1-\frac{1}{pn(qn+1-q/p)}\right)^{-1}.\]
\end{thm}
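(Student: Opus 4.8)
The plan is to derive the product formula from the Wallis-type formula \eqref{eq:wallis_sin} by taking a suitable limit in $n$. First I would specialize \eqref{eq:wallis_sin} to the case $r=q-1$, so that $1/u=(r+1)/q=1$ and the formula reads
\[
\int_0^{\pi_{p,q}/2}\sin_{p,q}^{qn+q-1}{t}\,dt
=\frac{(1)_n}{(1/p^*+1)_n}\cdot\frac{p^*}{q},
\]
using $\pi_{p,1}=2\cdot 1^*=2\infty$—no, that branch is degenerate, so instead I would work from the general identity before specialization: from the recursion $I_k=\dfrac{k-q+1}{q/p^*+k-q+1}I_{k-q}$ established in the proof of Theorem \ref{thm:wallis_integration}, iterate to write, for any fixed $r\in(-1,q-1]$,
\[
\frac{I_r}{I_{qn+r}}=\prod_{j=1}^{n}\frac{1/p^*+qj/q+\cdots}{\cdots},
\]
i.e. $\dfrac{I_{qn+r}}{I_r}=\dfrac{(1/u)_n}{(1/p^*+1/u)_n}$, and then rewrite the ratio of Pochhammer symbols as a telescoping product $\prod_{j=1}^n\dfrac{(j-1+1/u)}{(j-1+1/u+1/p^*)}=\prod_{j=1}^n\left(1+\dfrac{1/p^*}{j-1+1/u}\right)^{-1}$.

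Second, I would choose the parameters cleverly so that the limit $n\to\infty$ of $I_{qn+r}$ is something I can evaluate independently. The natural choice: as $k\to\infty$, $\sin_{p,q}^k t\to 0$ pointwise on $[0,\pi_{p,q}/2)$ and the integrals $I_k\to 0$; that alone gives $\prod_{j=1}^\infty(1+\tfrac{1/p^*}{j-1+1/u})$ diverges, which is not directly the claim. The better route is a \emph{ratio} of two such Wallis formulas with consecutive exponents, mimicking the classical derivation where $\pi/2=\lim_n \frac{(I_{2n})}{(I_{2n+1})}\cdot(\text{stuff})$ via the squeeze $I_{2n+1}\le I_{2n}\le I_{2n-1}$. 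Concretely, I would apply \eqref{eq:wallis_sin} (or \eqref{eq:qn}) with $r=0$ to get $I_{qn}=\dfrac{(1/q)_n}{(1/p^*+1/q)_n}\dfrac{\pi_{p,q}}{2}$, and with another shift to get a neighboring family, then use monotonicity $I_{k+1}\le I_k$ of $k\mapsto I_k$ (clear since $0\le\sin_{p,q}t\le 1$) to squeeze. Taking $n\to\infty$, the ratio of the two closed forms will converge to $1$, and solving for $\pi_{p,q}/2$ will leave exactly the infinite product $\prod_{n\ge1}\bigl(1-\tfrac{1}{pn(qn+1-q/p)}\bigr)^{-1}$ after simplifying $\tfrac{1/p^*}{\cdots}$ and recognizing that $1/p^*=1-1/p$, $1/q\cdot$(shift)$=1-q/p$ combine into the stated rational factor.

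The key algebraic step I expect to carry out is the identification of the product term: writing the typical factor of $\dfrac{(1/p^*+1/q)_n}{(1/q)_n}$ (the reciprocal of the coefficient multiplying $\pi_{p,q}/2$) as $\prod_{j=1}^n\dfrac{1/q+1/p^*+j-1}{1/q+j-1}=\prod_{j=1}^n\left(1+\dfrac{1-1/p}{j-1+1/q}\right)$, and checking that the limiting ratio of the two neighboring Wallis families (after the squeeze forces their quotient to $1$) reorganizes this into $\prod_{n=1}^\infty\dfrac{1}{1-\frac{1}{pn(qn+1-q/p)}}$; one verifies $1-\dfrac{1}{pn(qn+1-q/p)}=\dfrac{pn(qn+1-q/p)-1}{pn(qn+1-q/p)}$ and that the numerator factors as $(qn+1-q/p-1/p)\cdot(\text{something})$ or more cleanly matches a ratio $\dfrac{(n-1+a)(n+b)}{(n+c)(n+d)}$ of the combined consecutive products, so the whole thing telescopes.

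The main obstacle will be the convergence/squeeze argument: justifying rigorously that the ratio $I_{qn+r}/I_{qn+r'}\to 1$ for the two chosen residues $r,r'$ (the analogue of $I_{2n+1}/I_{2n}\to1$), which requires the monotonicity of $k\mapsto I_k$ plus a comparison showing the gap closes—classically one uses $\frac{I_{2n+1}}{I_{2n-1}}=\frac{2n}{2n+1}\to1$ sandwiching $I_{2n+1}/I_{2n}$. Here I would need the analogous elementary bound $I_{k}/I_{k-q}=\frac{k-q+1}{q/p^*+k-q+1}\to 1$ as $k\to\infty$ (immediate from the recursion) together with $I_{k}\le I_{k-1}\le\cdots\le I_{k-q}$ to conclude $I_{k}/I_{k-1}\to1$, and hence the desired ratio of the two Pochhammer-symbol closed forms tends to $1$; once that is in hand, reading off the product is routine algebra. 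A secondary care point is handling the degenerate endpoint conventions ($1^*=\infty$, $\pi_{s,1}=2s^*$) if any intermediate $u$ or $v$ hits $1$, but choosing $r=0$ and its neighbor keeps all parameters in $(1,\infty)$, so this should not actually intervene.
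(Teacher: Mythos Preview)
Your plan is essentially the paper's own proof: take the ratio of the two Wallis formulas for $I_{qn}$ (that is, $r=0$) and $I_{qn+q-1}$ (that is, $r=q-1$), squeeze using $I_{qn+q-1}<I_{qn}<I_{qn-1}$ together with $I_{q(n-1)+q-1}/I_{qn+q-1}\to 1$ from the recursion, and then read off $\pi_{p,q}/2=\lim_n\frac{(1)_n(1/p^*+1/q)_n}{(1+1/q)_n(1/p^*)_n}$ as the claimed product. One correction: your dismissal of the $r=q-1$ branch is based on a misreading of the conventions---the paper sets $\pi_{s,1}:=2s^*$, so $\pi_{p,1}=2p^*$ (not $2\cdot 1^*$), and the formula specializes cleanly to $I_{qn+q-1}=\dfrac{(1)_n}{(1/p^*+1)_n}\cdot\dfrac{p^*}{q}$; this is precisely the ``neighboring family'' you were looking for, and using it makes the algebra in your final identification immediate.
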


\begin{proof}
It follows from \eqref{eq:I_k} and Corollary \ref{cor:wallis_integration} that
\begin{equation}
\label{eq:ratio}
\frac{q\pi_{p,q}}{2p^*} \frac{I_{qn+q-1}}{I_{qn}}
=\frac{(1)_n(1/p^*+1/q)_n}{(1/q)_n(1/p^*+1)_n}
=\frac{(1)_n(1/p^*+1/q)_n}{(1+1/q)_n(1/p^*)_n}\frac{qn+1}{p^*n+1}.
\end{equation}
Now, since $0<I_{qn+q-1}<I_{qn}<I_{qn-1}$,  we have
\[1<\frac{I_{qn}}{I_{qn+q-1}}<\frac{I_{q(n-1)+q-1}}{I_{qn+q-1}}.\]
Moreover, 
\[\lim_{n \to \infty} \frac{I_{q(n-1)+q-1}}{I_{qn+q-1}}
=\lim_{n \to \infty} \frac{1/p^*+n}{n}=1;\]
so that 
\[\lim_{n \to \infty} \frac{I_{qn}}{I_{qn+q-1}}=1.\]
Thus, \eqref{eq:ratio} yields
\[\frac{\pi_{p,q}}{2}=\lim_{n \to \infty}\frac{(1)_n(1/p^*+1/q)_n}{(1+1/q)_n(1/p^*)_n};\]
that is,
\begin{align*}
\frac{\pi_{p,q}}{2}
&=\prod_{n=1}^\infty \frac{pn(qn+1-q/p)}{(pn-1)(qn+1)}\\
&=\prod_{n=1}^\infty \left(1-\frac{1}{pn(qn+1-q/p)}\right)^{-1},
\end{align*}
and the proof is complete.
\end{proof}

\begin{cor}
\[\frac{\varpi}{2}
=\prod_{n=1}^\infty \left(1-\frac{1}{2n(4n-1)}\right)^{-1}.\]
\end{cor}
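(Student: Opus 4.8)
The plan is to specialize Theorem~\ref{thm:wallis_product_general} to the parameters that produce the lemniscate constant. Recall that $\slem{x}=\sin_{2,4}{x}$ and $\varpi=\pi_{2,4}$, so the natural choice is $(p,q)=(2,4)$. With these values the general product formula reads
\[\frac{\pi_{2,4}}{2}
=\prod_{n=1}^\infty \left(1-\frac{1}{2n(4n+1-4/2)}\right)^{-1}
=\prod_{n=1}^\infty \left(1-\frac{1}{2n(4n-1)}\right)^{-1},\]
since $4n+1-4/2=4n-1$. Replacing $\pi_{2,4}$ by $\varpi$ on the left-hand side then gives exactly the asserted identity.

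The key steps, in order, are: (i) invoke Theorem~\ref{thm:wallis_product_general} with $p=2$ and $q=4$; (ii) simplify the quantity $q(1-1/p)$ appearing inside the product, namely $4\cdot(1-1/2)=2$, so that $qn+1-q/p=4n-1$; (iii) use the definitional identity $\varpi=\pi_{2,4}$ recorded in the introduction to rewrite $\pi_{2,4}/2$ as $\varpi/2$. That completes the proof; there is essentially no obstacle here, since the only nontrivial ingredient is Theorem~\ref{thm:wallis_product_general} itself, which we are entitled to assume.

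If one wants the corollary to be genuinely self-contained, the one point worth spelling out is the arithmetic simplification in step (ii): the parameter combination $1-q/p$ with $(p,q)=(2,4)$ equals $1-2=-1$, hence $qn+1-q/p=4n-1$, and likewise $pn=2n$, giving the denominator $2n(4n-1)$. Everything else is a direct substitution. I expect no difficulty in carrying this out; the "hard part," such as it is, was already done in proving the Wallis-type product formula for general $(p,q)$, and this corollary is merely the most attractive numerical instance.
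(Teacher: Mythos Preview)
Your proof is correct and follows exactly the paper's approach: the paper's own proof is the single sentence ``This is Theorem~\ref{thm:wallis_product_general} for $(p,q)=(2,4)$,'' and you have simply spelled out the arithmetic of that substitution.
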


\begin{proof}
This is Theorem \ref{thm:wallis_product_general} for $(p,q)=(2,4)$.
\end{proof}

\begin{rem}
A similar formula for $\varpi$ is obtained in \cite[Theorem 3.3]{H2014}.
\end{rem}

\section*{Appendix}

For the convenience of the reader we repeat formulas in \cite[Proposition 3.2]{EGL2012},
thus making our exposition self-contained: for $p,\ q \in (1,\infty)$ and $x \in [0,1]$
\begin{align}
\sin_{p,q}{\left( \frac{\pi_{p,q}}{2}x\right)}
&=\cos_{q^*,p^*}^{q^*-1}{\left( \frac{\pi_{q^*,p^*}}{2}(1-x)\right)}, \label{eq:sinpq}\\
\cos_{p,q}{\left( \frac{\pi_{p,q}}{2}x\right)}
&=\sin_{q^*,p^*}^{p^*-1}{\left( \frac{\pi_{q^*,p^*}}{2}(1-x)\right)}. \label{eq:cospq}
\end{align}

\begin{proof}
Formula \eqref{eq:sinpq} follows immediately from \eqref{eq:cospq} by 
replacing $(p,q,x)$ to $(q^*,p^*,1-x)$. Therefore, we will prove \eqref{eq:cospq}.

Let $y=(\cos_{p,q}^{p-1})^{-1}{t}$ for $t \in [0,1]$. 
By \eqref{eq:dcos},
the derivative of the inverse function is
\[\frac{dy}{dt}=-\frac{p^*}{q}\frac{1}{(1-t^{p^*})^{1/q^*}}.\]
Integrating both-sides from $t$ to $1$, we have
\[y=\frac{p^*}{q} \int_t^{1} \frac{dt}{(1-t^{p^*})^{1/q^*}}.\]
Thus,
\[ \sin_{q^*,p^*}^{-1}{t}+\frac{q}{p^*}y=\int_0^{1} \frac{dt}{(1-t^{p^*})^{1/q^*}}= \frac{\pi_{q^*,p^*}}{2},\]
which means
\[t=\sin_{q^*,p^*}{\left(\frac{\pi_{q^*,p^*}}{2}-\frac{q}{p^*}y\right)}.\]
Since $y \in [0,\pi_{p,q}/2]$, we can write $y=(\pi_{p,q}/2)x$ for $x \in [0,1]$.
Moreover, by $\pi_{p,q}=(p^*/q)\pi_{q^*,p^*}$, we obtain
\[\cos_{p,q}^{p-1}{\left( \frac{\pi_{p,q}}{2}x\right)}
=\sin_{q^*,p^*}{\left(\frac{\pi_{q^*,p^*}}{2}(1-x)\right)}.\]
This is equivalent to \eqref{eq:cospq}. 
\end{proof}

\section*{Acknowledgments}
The authors would like to thank the anonymous reviewers for his/her valuable comments
and suggestions to improve the quality of the paper.
Also, the authors would like to thank Professor Okihiro Sawada for informing problem
\eqref{eq:nbvp} and paper \cite{CINT2015}.

\end{document}